\newtheorem{definition}{Definition}[section]
\newtheorem{theorem}[definition]{Theorem}
\theoremstyle{remark}
\newtheorem{remark}[definition]{Remark}
\numberwithin{equation}{section}
\newcommand{\abs}[1]{\lvert#1\rvert}
\newcommand{\R}{\mathbb{R}}
\newcommand{\lap}{\mbox{$\triangle$}}
\newcommand{\fr}{\displaystyle\frac}
\newcommand{\jf}{\displaystyle\int}
\newcommand{\lt}{\left}
\newcommand{\rt}{\right}
\newcommand{\lm}{\lambda}
\newcommand{\mb}{\mbox}
\newcommand{\tm}{\times}
\newcommand{\ve}{\varepsilon}
\newcommand{\be}{\begin{equation}}
\newcommand{\ee}{\end{equation}}
\newcommand{\bee}{\begin{equation*}}
\newcommand{\eee}{\end{equation*}}
\title{  Master equations with indefinite nonlinearities}
\author{Wenxiong Chen and Yahong Guo}
\begin{document}
\maketitle
%\footnotetext[1]{E-mail: wchen@yu.edu(W.Chen), guoyahong1995@outlook.com(Y. Guo) .}

\begin{abstract}

In this paper, we  consider the  following indefinite fully fractional heat equation involving the master operator
\begin{equation*}
(\partial_t -\Delta)^{s} u(x,t) = x_1u^p(x,t)\ \ \mbox{in}\ \R^n\times\R ,
\end{equation*}
where $s\in(0,1)$, and  $-\infty < p < \infty$. Under mild conditions, we prove that there is no positive bounded solutions. To this end,
we first show that the solutions are strictly increasing along $x_1$ direction by employing the direct method of moving planes. Then by constructing an unbounded sub-solution, we derive the nonexistence of bounded solutions.

To circumvent the difficulties caused by the fully fractional master operator, we introduced some new ideas and novel approaches that, as we believe, will become useful tool in studying a variety of other fractional elliptic and parabolic problems.

 \end{abstract}

 \bigskip

\textbf{Mathematics Subject classification (2020): }  35R11; 35B06, 47G30; 35B50; 35B53.
\bigskip

\textbf{Keywords:}  master equations;   direct method
of moving planes; perturbation method; maximum  principles; strict monotonicity; sub-solutions; non-existence.   \\

\section{Introduction}
\

The primary objective of this paper is to prove the non-existence of positive bounded solutions for the following master equations with indefinite nonlinearity
\begin{equation}\label{2.0}
(\partial_t -\Delta)^{s} u(x,t) =  x_1u^p(x,t)\ \ \mbox{in}\ \ \R^n\times\R.
\end{equation}
% and $a(x)$ being an indefinite weight function in $\R^n.$
Here the fully fractional heat operator $(\partial_t-\Delta)^s$ was initially introduced by M. Riesz in \cite{Riesz}. It is a nonlocal pseudo differential operator of order $2s$ in space variables and of order $s$ in the time variable and can be defined by the following singular integral
\begin{equation}\label{nonlocaloper}
(\partial_t-\Delta)^s u(x,t)
:=C_{n,s}\int_{-\infty}^{t}\int_{\mathbb{R}^n}
  \frac{u(x,t)-u(y,\tau)}{(t-\tau)^{\frac{n}{2}+1+s}}e^{-\frac{|x-y|^2}{4(t-\tau)}}\operatorname{d}\!y\operatorname{d}\!\tau,
\end{equation}
where $0<s<1$, the integral in $y$ is in the sense of Cauchy principal value, and
the normalization constant $$C_{n,s}=\frac{1}{(4\pi)^{\frac{n}{2}}|\Gamma(-s)|}$$
with $\Gamma(\cdot)$ denoting the Gamma function. Note that this operator is nonlocal both in space and time, since the value of $(\partial_t-\Delta)^s u$ at a given point $(x,t)$ depends on the values of $u$ in the whole space $\mathbb{R}^n$ and on  all the past time before $t$.

We say that $u$ is a classical entire solution of \eqref{2.0} if
 $$u(x,t)\in C^{2s+\epsilon,s+\epsilon}_{x,\, t,\, {\rm loc}}(\mathbb{R}^n\times\mathbb{R}) \cap \mathcal{L}(\mathbb{R}^n\times\mathbb{R})$$
for some $\varepsilon >0$,
which ensure that the singular integral in \eqref{nonlocaloper} is well defined. Here,
the slowly increasing function space $\mathcal{L}(\mathbb{R}^n\times\mathbb{R})$ is defined by
$$ \mathcal{L}(\mathbb{R}^n\times\mathbb{R}):=\left\{u(x,t) \in L^1_{\rm loc} (\mathbb{R}^n\times\mathbb{R}) \mid \int_{-\infty}^t \int_{\mathbb{R}^n} \frac{|u(x,\tau)|e^{-\frac{|x|^2}{4(t-\tau)}}}{1+(t-\tau)^{\frac{n}{2}+1+s}}\operatorname{d}\!x\operatorname{d}\!\tau<\infty,\,\, \forall \,t\in\mathbb{R}\right\}.$$
The definition of the local parabolic H\"{o}lder space $C^{2s+\epsilon,s+\epsilon}_{x,\, t,\, {\rm loc}}(\mathbb{R}^n\times\mathbb{R})$ will be specified in Section \ref{5}\,.

It is interesting to note that the fractional powers of heat operator $(\partial_t-\Delta)^s$
is reduced to the regular heat operator $\partial_t-\Delta$ as $s\rightarrow 1$ (cf. \cite{FNW}). Moreover,
when the space-time nonlocal operator $(\partial_t-\Delta)^s$ is applied to a function that depends only on either space or on time, it reduces  to a well-known fractional order operator (cf. \cite{ST}).

In particular, if $u$ is only a function of $x$, then
 \begin{equation*}
   (\partial_t-\Delta)^s u(x)=(-\Delta)^s u(x),
 \end{equation*}
where $(-\Delta)^s$ is the widely recognized fractional Laplacian. This operator holds significant interest due to its diverse applications across various scientific disciplines, including  physics, chemistry, and biology, such as in anomalous diffusion, quasi-geostrophic flows,
thin obstacle problem, phase transitions, crystal dislocation, flame propagation, conservation
laws, multiple scattering, minimal surfaces, optimization, turbulence models, water waves,
molecular dynamics, and image processing ( see \cite{AB, BG, CV, GO} and the references therein).
Additionally, these operators play crucial roles in probability
and finance \cite{Be} \cite{CT} \cite{RSM}. In particular, the fractional Laplacians can be interpreted as the
infinitesimal generator of a stable L\'{e}vy process \cite{Be}.

In recent decades, considerable attention has been dedicated to the analysis of solutions to fractional elliptic equations and a series of fruitful results have been obtained.  Interested readers can refer to \cite{CLL1, CLZ, CW1, CZhu, DLL, LLW, LZ} and references therein.

While if $u=u(t)$, then
 \begin{equation*}
   (\partial_t-\Delta)^s u(t)=\partial_t^s u(t),
 \end{equation*}
where $\partial_t^s$ is the Marchaud fractional derivative of order $s$, defined as \begin{equation}\label{1.00}
\partial^s_t u(x,t)=C_s \jf_{-\infty}^t\fr{u(x,t)-u(x,\tau)}{(t-\tau)^{1+s}}d\tau.
\end{equation}
It emerges in a variety of physical phenomena, for instance,
particle systems with sticking and trapping phenomena, magneto-thermoelastic heat conduction, plasma turbulence and so on (cf. \cite{ACV,ACV1, DCL1, DCL2, EE}).

The space-time nonlocal equation represented by \eqref{2.0} arises in various physical and biological phenomena, such as anomalous diffusion \cite{KBS}, chaotic dynamics \cite{Z}, biological invasions \cite{BRR} and so on. In the financial domain, it also serves as a valuable tool for modeling scenarios where the waiting time between transactions is correlated with ensuing price jumps (cf. \cite{RSM}).

One prominent application of the master equation \eqref{2.0} is in the representation of continuous-time random walks, where $u$ signifies the distribution of particles subject to random jumps occurring simultaneously with random time lags (cf. \cite{MK}). This model, serving as a generalization of Brownian random walks formulated with a local time derivative, characterizes particles undergoing uncorrelated random displacements at fixed time intervals. The introduction of time non-locality captures the influence of anomalously large waiting times on the dynamics, while space non-locality accommodates the existence of anomalously large jumps, such as L\'{e}vy flights connecting distant regions in space.

% It is in contrast to the nonlocal parabolic equations like
%\begin{equation}\label{frac-para10}
 % \partial_t u+(-\Delta)^s u=f,
%\end{equation} or dual fractional parabolic equation
%\begin{equation}\label{frac-para1}
 % \partial_t^s u+(-\Delta)^s u=f,
%\end{equation}
%where jumps are independent of the waiting times.

\bigskip

\subsection{ The background on indefinite nonlinearities} \,

\subsubsection{Local elliptic equations}
\,

First consider the indefinite problem for the regular Laplacian in a smooth bounded domain $\Omega$ in $\mathbb
R^n$.
\begin{equation}
\left \{
\begin{array}{rll}
-\lap u= &a(x)u^p \quad \quad &\mbox{in} \ \Omega, \medskip \\
u=&0 \quad \quad &\mbox{on} \ \partial\Omega. \label{semi0}
\end{array}
\right.
\end{equation}
Here $a(x)$ is a smooth function that changes signs in $\Omega$, hence we call the right hand side $a(x)u^p$
an {\em indefinite nonlinearity}.
Let
$$ \Omega^+:=\{x\in \Omega: a(x)>0\} \quad \mbox{and} \quad
\Omega^-:=\{x\in \Omega: a(x)<0\}, $$ and assume
$$ \Gamma:=\overline{\Omega^+} \cap \overline{\Omega^-} \subset \Omega,
\quad \mbox{with} \ \nabla a(x)\not= 0 \ \ \forall x\in \Gamma.
$$

This problem has been extensively studied in the literature (see  \cite{AL}, \cite{AT}, \cite{BCN}, \cite{BCN1},
\cite{DL}, \cite{Z} and the references therein). In order to prove the existence and multiplicity of positive
solutions, it is very important to obtain a priori estimates  on the
solutions. Blowing-up and re-scaling techniques of Gidas-Spruck \cite{GS} and Liouville
theorems are very useful in obtaining the a priori bound. Concerning
 problem (\ref{semi0}), the maxima of a sequence of solutions may
blow up on $\partial \Omega$, in $\Omega^+\cup \Omega^-$ or on $\Gamma$.
If the blow-up occurs on $\partial \Omega$ or in $\Omega^+\cup
\Omega^-$, we can use the classical Liouville theorems on
$\mathbb R^n_+$ or in $\mathbb R^n$ to derive a contradiction and hence obtain the a priori bound. If the blow-up
occurs on $\Gamma$ where $a(x)$ vanishes, then after blowing-up and re-scaling, one would arrive at the following limiting equation
\begin{equation}
\left \{
\begin{array}{lrl}
-\lap u= x_1u^p \quad \quad &\mbox{in} \ \mathbb R^n, \medskip \\
u\geq 0 \quad \quad &\mbox{in} \ \mathbb R^n. \label{inde}
\end{array}
\right.
\end{equation}

Berestycki, Capuzzo-Docetta, and Nirenberg \cite{BCN} proved that this equation has no positive solution
for
$1< p<\frac{n+2}{n-1}$ and thus
obtained a priori bound for problem \eqref{semi0} in this case. Here, the Liouville type theorem, the non-existence of solutions, was the key
to derive such a priori estimate.

Later Chen and Li \cite{CL},\cite{CL1} further relaxed the
restriction on $a(x)$ near $\Gamma$ and obtain a priori bound with a
general $p>1$.

Lin \cite{Lin} showed that the nonnegative
solution for
$$ -\lap u=x_1^mu^{n^\ast} \quad \quad \mbox{in} \ \mathbb R^n $$
is trivial, when $m$ is an odd positive integer and
$n^\ast=\frac{n+2}{n-2}$ is the critical exponent of Sobolev
imbedding.

Du and Li \cite{DL} considered nonnegative solution of the
problem
\begin{equation}
\left \{
\begin{array}{lll}
-\lap u=h(x_1) u^p \quad \quad &\mbox{in} \ \mathbb R^n, \medskip \\
\sup_{\mathbb R^n} u <\infty,
\end{array}
\right.
\end{equation}
where $h(t)=t|t|^{s}$ or $h(t)=(t^+)^{s}$ for some $s>0$ and $p>1$.
They showed that the solution is trivial.

Zhu \cite{Z1} investigated the indefinite nonlinear
boundary condition motivated by a prescribing sign-changing scalar
curvature problem on compact Riemannian manifolds with boundary.  He
proved that there exists no positive solution for
\begin{equation}
\left \{
\begin{array}{rll}
-\lap u=&0 \quad \quad &\mbox{in} \ \mathbb R^n_+,  \nonumber \medskip \\
\frac{\partial u}{\partial x_n}=&-x_1u^p \quad \quad &\mbox{on} \
\partial \mathbb R^n_+.
\end{array}
\right.
\end{equation}

\subsubsection{Nonlocal elliptic equations}
\,
\smallskip

If one considers the fractional indefinite problem
\begin{equation}
\left \{
\begin{array}{rll}
(-\lap)^s u= &a(x)u^p \quad \quad &\mbox{in} \ \Omega, \medskip \\
u=&0 \quad \quad &\mbox{on} \ \partial\Omega, \label{semi}
\end{array}
\right.
\end{equation}
and applies the blow-up technique,
one will also have to deal with the case that the blow-up occurs on $\Gamma$, and after re-scaling one will arrive at
following limiting equation
\begin{equation}\label{2.00}
(-\Delta)^{s} u(x) =  x_1u^p(x)\ \ \mbox{in}\ \ \R^n.
\end{equation}

To establish the non-existence of positive solutions, a commonly employed method involves the use of moving planes. This technique, initially introduced by Alexandroff and further developed by Berestycki and Nirenberg \cite{BN} et. al. was originally designed for local equations and cannot be applied directly to psuedo-differential equations involving the fractional Laplacian
due to its nonlocal nature. To overcome this challenge, Cafferelli and Silvestre \cite{CS} introduced an ``extension method'' capable of  transforming a non-local equation into a local one in higher dimensions.
Consequently, the traditional methods designed for local equations  can be applied to the extended problem to study the properties of solutions.
This innovative approach has led to a series of compelling results (refer to \cite{CLL,  CLZ, CW1, DLL, LW,LLW} and the references therein).

As an example, Chen and Zhu utilized the``extension method'' outline above  in \cite{CZhu}, and then applied the method of moving planes to show the monotonicity of  solutions for the extended problem, and hence derived the non-existence of positive bounded solutions for equation \eqref{2.00} in the case where $s\in[\frac{1}{2},1)$. This restriction on the value of $s$ was due to the nature of the extended equation.

Subsequently, Chen, Li and Zhu   \cite{CLZ} extended the range of $s$ from  $[\frac{1}{2},1)$ to $(0,1)$ by employing  a {\em direct method of moving planes} introduced by Chen, Li, and Li in \cite{CLL}. This method  significantly simplify the proof and has found widespread applications in establishing the symmetry, monotonicity, non-existence of solutions
for various elliptic equations and systems involving the fractional Laplacian, the fully nonlinear
nonlocal operators, the fractional p-Laplacians, and the higher order fractional operators. For a comprehensive review, please
refer to \cite{ CL,CL2,CLL,CLL1,CW,DQ,DQW,GMZ1,GMZ2,LW1,LW,LZ,LZ1,WuC} and the references therein.
\medskip

\subsubsection{Local and nonlocal parabolic equations}
\,
\smallskip

For indefinite local parabolic problems, Pol$\acute{a}\check{c}$ik and Quittner \cite{PQ} established the non-existence of bounded positive solutions of the following equation
\begin{equation} \label{integerL1}
\partial_t u(x,t)-\Delta u(x,t) =  a(x_1)f(u)\ \ \mbox{in}\ \ \R^n\tm\R.
\end{equation}

This kind of Liouville theorem plays an important role in deriving a priori estimates. It
can be employed to derive suitable a priori bounds for solutions of a family of corresponding equations through blowing-up and re-scaling arguments and to study the complete blow-up (see \cite{BV, MZ1, PQ, PQS, Q, QS}). It is well-known that these a priori estimates are important ingredients in obtaining the existence of
solutions of the same equations.

For indefinite fractional parabolic problems, Chen, Wu, and Wang \cite{CWW} modified the {\em direct method of moving planes} for nonlocal elliptic problems  such that it can be applied to indefinite fractional parabolic problems and thus established the non-existence of positive solutions for
\begin{equation}\label{2.02}
\partial_t u(x,t)+(-\Delta)^s u(x,t) =  x_1u^p(x,t)\ \ \mbox{in}\ \ \R^n\tm\R.
\end{equation}

\smallskip

So far, we have not seen the Liouville type theorems for solutions to nonlocal master equations \eqref{2.0} with indefinite nonlinearity, primarily owing to its full nonlocality and strongly correlated nature.  Traditional approaches, commonly applicable to the local heat operator $\partial_t-\Delta$
  as depicted in \eqref{integerL1} or to the fractional heat operator $\partial_t + (-\Delta)^s$ with local time derivative as shown in \eqref{2.02}, no longer suffice in this context.
  \medskip

\subsection{The non-locality of the master operator} \,

To illustrate the essential differences between the integer order parabolic operator and  the fractional order ones. let's consider
a simple version of maximum principle in the parabolic cylinder $\Omega\tm(0,1] $ with $\Omega$ being a bounded domain in $\R^n$.
\medskip

\begin{center}
\begin{tikzpicture}[scale=2]
 \draw[blue!30,fill=blue!30] (1,0) rectangle (2,1.5);
  \draw[blue!30,fill=blue!30] (-1,0) rectangle (-2,1.5);
  \draw[yellow!30,fill=yellow!30] (-1,0) rectangle (1,-1.5);
  \draw[green!30,fill=green!30] (1,0) rectangle (2,-1.5);
  \draw[green!30,fill=green!30] (-1,0) rectangle (-2,-1.5);
\draw [thick]  [black] [->, thick](-2.2,0)--(2.2,0) node [anchor=north west] {$x$};
\draw [thick]  [black!80][->, thick] (0,-1.7)--(0,1.7) node [black][ above] {$t$};
\path node at (-0.1,-0.1) {$0$};
\draw [thick] [dashed] [blue] (-2,1.5)--(2,1.5);
\draw [thick] [dashed] [blue] (1,1.5)--(1,-1.5);
\draw [thick] [dashed] [blue] (-1,1.5)--(-1,-1.5);

\draw[thick][red] (-1,0) -- (-1,1.5);
\draw[thick][red] (1,0) -- (1,1.5);
\draw [thick]  [green] (-1,0)--(1,0);
\draw[fill=red] (1,0) circle (0.02);
\draw[fill=red] (0,1.5) circle (0.02);
\draw[fill=red] (-1,0) circle (0.02);
%\node at (1.1,-0.2) {$1$};
\node at (0.1,1.62) {$1$};
%\node at (-0.85,-0.2) {$-1$};
\path  (0.06,0.55) [purple][semithick] node [ font=\fontsize{10}{10}\selectfont] {$\Omega\times(0,1]$};

\path  (0.06,0.85) [purple][semithick] node [ font=\fontsize{10}{10}\selectfont] {$\mathcal{L}u\geq 0$};
%\path  (0,0.5) [purple][semithick] node [ font=\fontsize{10}{10}\selectfont] {where there exist points};
%\path  (0.07,0.25) [purple][semithick] node [ font=\fontsize{10}{10}\selectfont]  {such that $u(x,t)<0$};
\path  (1.5,1) [purple][semithick] node [ font=\fontsize{10}{10}\selectfont] {$(I)$};
\path  (-1.5,1) [purple][semithick] node [ font=\fontsize{10}{10}\selectfont] {$(I)$};
\path  (-0.2,-0.5) [purple][semithick] node [ font=\fontsize{10}{10}\selectfont] {$(II)$};
\path  (1.5,-0.5) [purple][semithick] node [ font=\fontsize{10}{10}\selectfont] {$(III)$};
\path  (-1.5,-0.5) [purple][semithick] node [ font=\fontsize{10}{10}\selectfont] {$(III)$};
\node [below=0.5cm, align=flush center,text width=12cm] at  (0,-1.5)
        {Figure 1. The region where the condition $u(x,t)\geq 0$ is required. };
\end{tikzpicture}
\end{center}

{\em (i) For the integer order heat operator:}

If \be\label{e1}\mathcal{L}u:=\partial_t u(x,t) -\Delta u(x,t) \geq 0, \;\;\; (x,t) \in \Omega \times (0,1],\ee
in order to derive that
 $$u(x,t) \geq 0 \mbox{ for } (x,t) \in \Omega \times (0,1], $$
one only needs to assume $u(x,t)\geq 0$  at the parabolic lateral boundary $\partial \Omega\tm(0,1]$ (the red line in Figure 1) and at the bottom boundary $\Omega\tm\{0\}$ (the green line in Figure 1).
\medskip

{\em (ii) For an integer order time derivative with the fractional Laplacian:}

If \be\label{e2}\mathcal{L}u:=\partial_t u(x,t) + (-\Delta)^s u(x,t) \geq 0, (x,t) \in \Omega \times (0,1],\ee
to ensure that $$u(x,t) \geq 0 \mbox{ for }  (x,t) \in \Omega \times (0,1],$$
one needs to require $u(x,t)\geq 0$   on  $\Omega^c\tm(0,1]$ (region $(I)$ in Figure 1) and also on the bottom boundary $\Omega\tm\{0\}$ (the green line in Figure 1).
\medskip

{\em (iii) For the dual fractional operator:}

If \be\label{e3}\mathcal{L}u:=\partial^\alpha_t u(x,t) + (-\Delta)^s u(x,t) \geq 0, (x,t) \in \Omega \times (0,1],\ee
to ensure that $$u(x,t) \geq 0 \mbox{ for }  (x,t) \in \Omega \times (0,1],$$
one needs to specify the initial condition for the entire past time before $t=0$ instead of only at the initial time moment $t=0$ and impose the boundary condition in the whole of $\Omega^c\tm(0,1]$ rather than on the parabolic lateral boundary $\partial \Omega\tm(0,1]$ (regions $(I)\ \mb{and} \ (II)$ in Figure 1).
%\begin{equation}\label{fd}
%\begin{cases}
%\partial^\alpha_t u(x,t) + (-\Delta)^s u(x,t) \geq 0, &(x,t) \in \Omega \times (0,1], \\
%u(x,t) \geq 0, &(x,t) \in \Omega^c \times (0,1], \\
%u(x,t) \geq 0, &(x,t) \in \Omega\times (-\infty,0],
%\end{cases}
%\end{equation}
%then $u(x,t) \geq 0$ for $(x,t) \in \Omega \times (0,1]$.
%Due to the dual nonlocal nature of the fractional time derivative $\partial_t^\alpha$ and the fractional Laplacian, it is necessary to specify the initial condition for the entire past time before $t=0$ instead of only at the initial time moment $t=0$ and impose the boundary condition in the whole of $\Omega^c\tm(0,1]$ rather than on the parabolic lateral boundary $\partial \Omega\tm(0,1]$ (see part $(I)\ \mb{and} \ (II)$ in Figure 1).

{\em (iv) Now for the fully fractional master operator, the maximum principle reads as:

Assume that $u(x,t)$ is a solution of past-time and exterior values problem
\begin{equation}\label{IMP1}
\left\{
\begin{array}{ll}
    (\partial_t-\Delta)^su(x,t)\geq 0 ,~   &(x,t) \in  \Omega\times(0,1]  , \\
  u(x,t)\geq 0 , ~ &(x,t)  \in  (\mathbb{R}^n\setminus\Omega)\times(0,1),\\
  u(x,t)\geq 0 , ~ &(x,t)  \in \mathbb{R}^n\times(-\infty,0].
\end{array}
\right.
\end{equation}
Then $u(x,t)\geq 0$ in $\Omega\times(0,1]$} (see the detailed proof in Section \ref{5}).

Here due to the nonlocal and strongly correlated nature of the fully fractional master operator $(\partial_t-\Delta)^s$, in order to ensure the validity of the classical maximum principle, besides the exterior condition on
$(\mathbb{R}^n\setminus\Omega)\times(0,1)$ (region $(I)$ in Figure 1),
we must also require the past-time condition $u(x,t)\geq 0$ to hold on $\mathbb{R}^n\times(-\infty,0]$ (regions $(II)$ and $(III)$ in Figure 1), rather than just on $\Omega\times\{0\}$ or on $\Omega\times(-\infty,0]$ as required by the maximum principle for  equations \eqref{e2} and  \eqref{e3}, respectively.
%If one only requires the initial condition $u(x, 0) \geq 0$ for problem \eqref{IMP1}, then the conclusion of the maximum principle may not hold in general, as demonstrated by the following two counterexamples.

There are counterexamples as provided in \cite{MGZ},
which reveal that if the  past-time condition is only satisfied on some part of $\mathbb{R}^n \times (-\infty,0]$, then the maximum principle is violated for the master operator.
\medskip

To circumvent such difficulties caused by the fully fractional heat operator, one needs to introduce new ideas and to develop  different new methods.

\subsection{Our main results and new ideas} \,

We are now ready to state our main results of the paper.
\medskip

\leftline{\bf (i) The case $p>1$.}
\smallskip

We first derive the monotonicity of solutions.
\smallskip

\begin{theorem}\label{thm1}
Let $$u(x,t)\in C^{2s+\epsilon,s+\epsilon}_{x,\, t,\,{\rm loc}}(\R^n\times\R)\cap \mathcal{L}(\mathbb{R}^n\times\mathbb{R})$$ be a  positive bounded classical solution of
\begin{equation}\label{2.111}
(\partial_t -\Delta)^{s} u(x,t) =  x_1u^p(x,t)\ \ \mbox{in}\ \ \R^n\times\R,
\end{equation}
where $0<s<1$ and $1<p<+\infty$.
Then for each $t\in\R,$ $u(\cdot,t)$ is  strictly monotone increasing  in $x_1$ direction.
\end{theorem}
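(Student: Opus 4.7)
The plan is to run the direct method of moving planes in the $x_1$-direction. For $\lambda \in \R$ let $T_\lambda = \{x_1 = \lambda\}$, $\Sigma_\lambda = \{x_1 < \lambda\}$, $x^\lambda = (2\lambda - x_1, x_2, \ldots, x_n)$, $u_\lambda(x,t) = u(x^\lambda, t)$ and $w_\lambda = u_\lambda - u$. Because the Gaussian kernel $(t-\tau)^{-(n/2+1+s)}e^{-|x-y|^2/(4(t-\tau))}$ is invariant under the reflection $x_1 \mapsto 2\lambda - x_1$, applying $(\partial_t - \Delta)^s$ to $u_\lambda$ and $u$ and subtracting yields, on $\Sigma_\lambda \times \R$,
\bee
(\partial_t - \Delta)^s w_\lambda(x,t) = c_\lambda(x,t)\,w_\lambda(x,t) + 2(\lambda - x_1)\,u^p(x,t),
\eee
with $c_\lambda = p(2\lambda - x_1)\xi^{p-1}$ for some $\xi$ between $u$ and $u_\lambda$ via the mean value theorem, while on $\Sigma_\lambda^c \times \R$ one has the antisymmetry $w_\lambda(y,\tau) = -w_\lambda(y^\lambda,\tau)$. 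Strict monotonicity in $x_1$ is equivalent to $w_\lambda > 0$ on $\Sigma_\lambda \times \R$ for every $\lambda \in \R$; the nonnegative term $2(\lambda - x_1) u^p$ arising from the indefinite structure will be the main driver of the argument.

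I would carry this out in two stages. The \emph{start step} proves $w_\lambda \ge 0$ on $\Sigma_\lambda \times \R$ for every $\lambda$ sufficiently negative. Assuming $m := \inf w_\lambda < 0$, pick a minimizing sequence $(x^k, t^k)$; using boundedness of $u$ and translation invariance of $(\partial_t - \Delta)^s$ in $(x_2, \ldots, x_n, t)$, translate and pass to a limit $\tilde w_\lambda$ attaining a negative infimum at the origin (the runaway case $x_1^k \to -\infty$ must be ruled out separately using the strongly negative right-hand side $x_1 u^p$ together with boundedness of $u$). Splitting the singular integral defining $(\partial_t - \Delta)^s \tilde w_\lambda(0,0)$ across $T_\lambda$, using the antisymmetry on $\Sigma_\lambda^c$, and exploiting the pointwise comparison $e^{-|x-y|^2/(4(t-\tau))} \ge e^{-|x - y^\lambda|^2/(4(t-\tau))}$ for $x, y \in \Sigma_\lambda$, one extracts a strict quantitative bound $(\partial_t - \Delta)^s \tilde w_\lambda(0,0) \le C\,\tilde w_\lambda(0,0) < 0$; the PDE right-hand side, however, has the opposite sign for $\lambda$ sufficiently negative, since the ``good'' term $2(\lambda - x_1) u^p \ge 0$ dominates once the sign of $c_\lambda w_\lambda$ has been controlled via the indefinite structure. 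The \emph{continuation step} defines $\lambda_0 = \sup\{\lambda : w_\mu \ge 0 \text{ on } \Sigma_\mu \times \R \ \forall\, \mu \le \lambda\}$ and aims at $\lambda_0 = +\infty$. If $\lambda_0 < +\infty$, a strong maximum principle for antisymmetric solutions of the master equation (again via the symmetric kernel) upgrades $w_{\lambda_0} \ge 0$ to $w_{\lambda_0} > 0$ in $\Sigma_{\lambda_0} \times \R$, and a narrow-region/perturbation principle for the master operator, combined with the same translation-and-limit scheme of the start step, extends $w_\lambda \ge 0$ to some $\lambda_0 + \epsilon$, contradicting maximality.

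The main obstacle is the full spatial and past-time non-locality of $(\partial_t - \Delta)^s$: as already discussed around \eqref{IMP1}, its value at $(x^*,t^*)$ depends on $w_\lambda$ throughout $\R^n \times (-\infty, t^*]$, so the classical moving-plane maximum-principle toolkit used for \eqref{integerL1} or for \eqref{2.02} does not transfer. When one translates and passes to a limit to cope with a non-attained infimum, the entire past-time data and the antisymmetry across $T_\lambda$ must be preserved simultaneously, whereas the coefficient $x_1$ of the indefinite nonlinearity is \emph{not} invariant under $x_1$-translations. I expect the most delicate step to be the design of an auxiliary function or weighted barrier that tames the potentially unbounded potential $p(2\lambda - x_1)\xi^{p-1}$ for $x_1$ very negative and lets the ``good'' term of the indefinite structure dominate, all while surviving the translation-and-limit passage --- this is presumably where the ``new ideas and novel approaches'' advertised in the abstract are brought to bear.
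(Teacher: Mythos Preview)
Your overall framework---moving planes, the antisymmetry of $w_\lambda$, the kernel inequality $e^{-|x-y|^2/(4(t-\tau))} \ge e^{-|x-y^\lambda|^2/(4(t-\tau))}$ for $x,y \in \Sigma_\lambda$, and the decomposition of the right-hand side into the ``good'' term $2(\lambda-x_1)u_\lambda^p$ plus a mean-value remainder---matches the paper exactly. The divergence is entirely in how the non-attained infimum is handled, and this is where your plan has a genuine gap.

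You propose to translate in $(x_2,\ldots,x_n,t)$ and pass to a limit $\tilde w_\lambda$ that attains its infimum. The paper explicitly avoids this route (see Section~1.3, item (ii)): passage to a limiting equation for $(\partial_t-\Delta)^s$ requires compactness and hence higher-order regularity on $u$ that is \emph{not} assumed here. Your ``narrow-region/perturbation principle'' in the continuation step is likewise left unspecified for an operator that is nonlocal in time over all of $\R$, where there is no initial moment to anchor a bounded-cylinder maximum principle.

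The paper's device is instead a \emph{subtractive} perturbation. Given an approximate minimizing sequence $(x^k,t_k)$ with $w_\lambda(x^k,t_k)=-A+\varepsilon_k$, set $v_k = w_\lambda - \varepsilon_k \eta_k$, where $\eta_k$ is a smooth cutoff supported in a parabolic cylinder of radius $r_k = \tfrac12\operatorname{dist}(x^k,T_\lambda)$ about $(x^k,t_k)$. Then each $v_k$ genuinely attains its minimum at some $(\bar x^k,\bar t_k)$ in that cylinder, and one estimates the singular integral \emph{there}, obtaining $(\partial_t-\Delta)^s w_\lambda(\bar x^k,\bar t_k)\le C(-A+\varepsilon_k)/r_k^{2s}$; for $\lambda\le 0$ this already contradicts the differential inequality since $p\bar x_1^k\xi^{p-1}w_\lambda(\bar x^k,\bar t_k)\ge 0$. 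The continuation step reuses the same trick (with $m_k^2$ replacing $\varepsilon_k$), after first showing $m_k := -\inf w_{\lambda_k} \to 0$ via the uniform continuity of $u$; the good term $2(\lambda_k-\bar x_1^k)u_{\lambda_k}^p \sim r_k\cdot c^p$ then beats the $O(m_k/r_k^{2s})$ bound and yields the contradiction. No limit of equations is ever taken; everything is done along the sequence. This cutoff-subtraction trick---rather than division by an auxiliary function growing at infinity, or a translate-and-limit scheme---is precisely the ``new idea'' you anticipated but did not identify.
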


Then based on the monotonicity result, we construct a sub-solution and use a contradiction argument to establish the non-existence of solutions.
\begin{theorem}\label{thm2}
 For any $1<p< +\infty$, equation  \eqref{2.111} possesses no positive bounded classical solutions.
\end{theorem}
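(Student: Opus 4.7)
By Theorem \ref{thm1}, any positive bounded classical solution $u$ of \eqref{2.111} is strictly increasing in $x_1$. I would deduce a contradiction by showing, for every large $k$, that $u$ must dominate a sub-solution on a translated unit cylinder $Q_k := B_1(ke_1) \times (t_0 - 1, t_0]$ whose supremum grows linearly in $k$. To obtain the uniform positive lower bound needed in the construction, fix any $t_0 \in \R$ and set $m := \min \{u(0, x', t) \colon |x'| \leq 1,\ t \in [t_0 - 1, t_0]\}$; continuity and positivity of $u$ give $m > 0$. The monotonicity in $x_1$ then yields $u(x,t) \geq m$ whenever $x_1 \geq 0$, $|x'| \leq 1$, and $t \in [t_0 - 1, t_0]$; in particular, for every $k \geq 1$,
\begin{equation*}
(\partial_t - \Delta)^s u(x,t) = x_1 u^p(x,t) \geq (k-1)\, m^p \quad \text{on } Q_k.
\end{equation*}

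Next, I would build a single comparison profile $V$, independent of $k$, by solving
\begin{equation*}
(\partial_t - \Delta)^s V = 1 \ \ \text{in } B_1(0) \times (t_0-1, t_0], \quad V \equiv 0 \ \ \text{on } (\R^n \times \R) \setminus (B_1(0) \times (t_0-1, t_0]).
\end{equation*}
Existence with sufficient regularity follows from the standard linear theory for the master operator, \eqref{IMP1} yields $V \geq 0$, and the strong maximum principle (positivity of the fundamental solution of $(\partial_t - \Delta)^s$) supplies a definite constant $c_0 := \sup V > 0$. Define the sub-solution by $\phi_k(x,t) := (k-1)\, m^p\, V(x - ke_1, t)$. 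Translation invariance of the operator in space and time then gives $(\partial_t - \Delta)^s \phi_k \equiv (k-1) m^p$ on $Q_k$, $\phi_k$ vanishes outside $Q_k$, and $\sup \phi_k = (k-1)\, m^p\, c_0$.

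Applying \eqref{IMP1} to $w := u - \phi_k$ on $Q_k$ yields $u \geq \phi_k$ throughout $Q_k$: the interior inequality $(\partial_t - \Delta)^s w \geq 0$ follows from the two displays above, and the required nonnegativity of $w$ on $(\R^n \setminus B_1(ke_1)) \times (t_0-1, t_0)$ and on $\R^n \times (-\infty, t_0-1]$ is immediate from the strict positivity of $u$ and the vanishing of $\phi_k$. Hence $\sup_{Q_k} u \geq (k-1)\, m^p\, c_0 \to \infty$ as $k \to \infty$, which contradicts the assumed boundedness of $u$. The main technical obstacle is constructing the profile $V$ with enough regularity to serve as a classical sub-solution in the sense demanded by \eqref{IMP1}, together with the quantitative positive supremum $c_0 > 0$: the full non-locality of $(\partial_t - \Delta)^s$ in both space and past time prevents a naive construction and forces one to invoke the complete existence/regularity theory for the master equation on parabolic cylinders, and to verify that the past-time and exterior conditions for the maximum principle are met in a genuinely classical pointwise sense.
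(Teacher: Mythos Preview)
Your overall strategy is correct and coincides with the paper's: use the monotonicity from Theorem~\ref{thm1} to get a uniform positive lower bound for $u$ on translated cylinders $B_1(ke_1)\times(\text{fixed time interval})$, build a sub-solution there whose height grows with $k$ (or with the translation parameter), and apply the maximum principle \eqref{IMP1}/Theorem~\ref{thm11} to $w=u-\text{(sub-solution)}$ to force $\sup u\to\infty$.

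The difference lies only in the choice of sub-solution. You invoke the abstract solution $V$ of the Dirichlet problem $(\partial_t-\Delta)^s V=1$ on a unit cylinder with zero exterior/past data, then scale by $(k-1)m^p$. The paper instead writes down an explicit sub-solution $v(x,t)=\phi_R(x)(t^\beta-1)$, where $\phi_R$ is the translated first Dirichlet eigenfunction of $(-\Delta)^s$ on $B_1(Re_1)$ and $0<\beta<s$; a direct computation splits $(\partial_t-\Delta)^s v$ into $\eta(t)(-\Delta)^s\phi_R + \phi_R\,\partial_t^s\eta$ and yields the bound $(\partial_t-\Delta)^s v\le C_T$ by hand. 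The paper then chooses $T=(M+1)^{1/\beta}$ and $R$ large so that $(R-1)m_T^p>C_T$, reaching the same contradiction. Your route is conceptually cleaner but, as you correctly flag, offloads the real work onto the existence/regularity theory for the master Dirichlet problem (in particular you need $V$ regular enough at interior points for $(\partial_t-\Delta)^s V$ to be classically defined there, and continuous up to the parabolic boundary so the comparison applies). The paper's explicit construction is self-contained and avoids that black box entirely; note, incidentally, that both your $V$ and the paper's $\phi_R$ are only $C^s$ across the lateral boundary, so neither lies in $C^{2s+\epsilon,s+\epsilon}_{x,t,\text{loc}}(\R^n\times\R)$ in the literal sense---but the proof of Theorem~\ref{thm11} only uses this regularity at the interior minimum, where both sub-solutions are smooth.
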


\begin{remark}
To better illustrate the main ideas, we only consider the simple example as in equation \eqref{2.111}. Actually, the methods developed here are  also applicable to more general indefinite nonlinearity $a(x)f(u)$. Interested readers may work out the details.
\end{remark}
\bigskip

\leftline{\bf (ii) The case $p< 0$.}
\smallskip

In this case we can derive the non-existence of solutions without using their monotonicity.
\begin{theorem}\label{thm3}
 For any $p< 0$, the equation  \eqref{2.111} possesses no positive bounded classical solutions.
\end{theorem}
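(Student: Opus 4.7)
The case $p<0$ is fundamentally simpler than $p>1$ because the nonlinearity $u^p$ is automatically bounded away from zero whenever $u$ is bounded above. Concretely, let $u$ be a positive bounded classical solution and set $M:=\sup_{\R^n\tm\R} u$. Since $p<0$ and $u>0$ one has
\begin{equation*}
u^p(x,t)\,\ge\, M^p \,=:\, c_0 \,>\, 0 \qquad \text{for all } (x,t)\in\R^n\tm\R.
\end{equation*}
Consequently the monotonicity statement of Theorem \ref{thm1}, which in the case $p>1$ was invoked precisely to keep $u$ bounded below in the right half-space, plays no role here and can be bypassed entirely.

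Substituting the above pointwise bound into \eqref{2.111} yields the linear lower bound
\begin{equation*}
(\partial_t-\Delta)^s u(x,t)\,\ge\, c_0\, x_1 \qquad \text{in } \{x_1\ge 0\}\tm\R.
\end{equation*}
From here I would mimic the non-existence strategy that the authors use to prove Theorem \ref{thm2}: construct an unbounded sub-solution $\underline u$ supported inside a space-time strip $(R_0,R_1)\tm\R^{n-1}\tm\R$ which satisfies
\begin{equation*}
(\partial_t-\Delta)^s \underline u(x,t)\,\le\, c_0\, x_1
\end{equation*}
within the strip and is nonpositive on its past-time and exterior complements. Applying the past-time and exterior maximum principle \eqref{IMP1} to $w:=u-\underline u$ on $(R_0,R_1)\tm\R^{n-1}\tm(T_0,T_1]$ then forces $u\ge\underline u$ inside the strip; since the free parameters in $\underline u$ can be tuned so that $\sup\underline u$ is arbitrarily large, this contradicts $u\le M$.

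The main obstacle, as in Theorem \ref{thm2}, is the construction of $\underline u$. The full space-time non-locality rules out the compactly supported bump functions that work for local parabolic operators, since every cutoff feeds back through the entire past and the whole spatial exterior into $(\partial_t-\Delta)^s \underline u$. A natural candidate is to take $\underline u(x,t)=\eta(t)\phi(x_1)$ with $\phi$ a function of $x_1$ alone, for then the master operator decouples, up to explicit constants, into a Marchaud-type time derivative acting on $\eta$ plus a one-dimensional fractional Laplacian acting on $\phi$ (cf.\ the reduction noted in Section~1). One then selects $\phi$ and $\eta$ so that the leading contribution matches $c_0\,x_1$, estimates the tail terms via the Gaussian kernel in \eqref{nonlocaloper}, and finally translates the strip to the right so that the factor $x_1$ grows, allowing the amplitude of $\underline u$ to be scaled up without spoiling the sub-solution inequality. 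Letting this amplitude tend to infinity produces the desired contradiction with the boundedness of $u$.
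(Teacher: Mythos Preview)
Your opening observation is exactly the paper's: since $p<0$ and $0<u\le M$, one has $u^p\ge M^p>0$ everywhere, so the monotonicity from Theorem~\ref{thm1} is unnecessary and the proof of Theorem~\ref{thm2} can be repeated verbatim with $M^p$ in place of the lower bound $m_T^p$ that came from monotonicity. This is precisely what the paper says.

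The specific sub-solution you sketch in the last paragraph, however, has two genuine gaps. First, the master operator does \emph{not} decouple on a product $\eta(t)\phi(x_1)$ into a Marchaud derivative plus a one-dimensional fractional Laplacian: the kernel in \eqref{nonlocaloper} couples space and time inseparably, a point the paper stresses in the introduction. What is available is only an \emph{inequality}: writing $\phi(x)\eta(t)-\phi(y)\eta(\tau)=(\phi(x)-\phi(y))\eta(t)+(\eta(t)-\eta(\tau))\phi(y)$ and using $\phi\ge0$, $\eta$ increasing, gives
\[
(\partial_t-\Delta)^s(\phi\eta)\;\le\;\eta(t)\,(-\Delta)^s\phi(x)+\bigl(\sup\phi\bigr)\,\partial_t^s\eta(t),
\]
which suffices for a sub-solution bound but is not the splitting you describe. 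Second, your strip $(R_0,R_1)\times\R^{n-1}$ is unbounded, so the maximum principle \eqref{IMP1} (proved only for bounded $\Omega$, see Theorem~\ref{thm11}) does not apply.

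The paper sidesteps both issues by taking $v(x,t)=\phi_R(x)(t^\beta-1)$ with $0<\beta<s$ and $\phi_R$ the first Dirichlet eigenfunction of $(-\Delta)^s$ on the \emph{bounded} ball $B_1(Re_1)$, normalized so $\max\phi_R=1$. The eigenfunction identity $(-\Delta)^s\phi_R=\lambda_1\phi_R$ plus the inequality above give $(\partial_t-\Delta)^s v\le C_T$ on $B_1(Re_1)\times[1,T]$, while the equation gives $(\partial_t-\Delta)^s u\ge(R-1)M^p$ there. Choosing $R$ large enough that $(R-1)M^p>C_T$ and applying Theorem~\ref{thm11} to $w=u-v$ forces $u>v$ on the cylinder; taking $T=(M+1)^{1/\beta}$ then yields $M>T^\beta-1=M$, the contradiction.
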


\leftline{\bf (iii) The case $0<p<1$.}
\smallskip

In this situation, we are still trying  to prove the monotonicity. Let's now assume that
\be\label{further-c}u(\cdot,t) \mb{ is   monotone increasing  in }x_1\mb{  direction for each } t\in\R.\ee
Then we can derive
\begin{theorem}\label{thm4}
For any $0<p< 1$, if $u$ satisfies \eqref{further-c},
then the  equation \eqref{2.111} possesses no bounded positive classical solutions.
\end{theorem}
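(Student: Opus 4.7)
The plan is to adapt the sub-solution scheme used in Theorem~\ref{thm2} to the sublinear regime $0<p<1$. Granted the monotonicity \eqref{further-c}, I would first convert monotonicity into a uniform positive lower bound for $u$ on a right half-space $\{x_1\geq R\}\times\R$, then exploit $u^p\geq m^p$ (valid because $0<p<1$ and $u\geq m$) to reduce \eqref{2.111} to a linear differential inequality whose right-hand side is linear in $x_1$, and finally compare $u$ against an unbounded sub-solution to contradict $\sup_{\R^n\times\R}u<\infty$.

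For the lower-bound step, monotonicity of $u(\cdot,t)$ in $x_1$ together with boundedness forces the pointwise limit $U(x',t):=\lim_{x_1\to+\infty}u(x_1,x',t)$ to exist and to lie in $(0,\sup u]$. Analyzing the translates $u_R(x,t):=u(x_1+R,x',t)$ as $R\to+\infty$ and using the translation invariance of $(\partial_t-\Delta)^s$, one passes (up to extraction) to a limiting profile that satisfies a master equation whose right-hand side carries the factor $x_1+R$, which tends to $+\infty$ unless the limit $U$ vanishes identically. A Liouville-type argument then forces $U$ to be a positive constant, so that
\begin{equation*}
u(x,t)\ \geq\ m/2\quad\text{on}\quad\{x_1\geq R_0\}\times\R
\end{equation*}
for some $m,R_0>0$. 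On this half-space the equation yields
\begin{equation*}
(\partial_t-\Delta)^s u(x,t)\ \geq\ (m/2)^p\,x_1.
\end{equation*}

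Next, construct a sub-solution $w=w(x_1)$ depending only on $x_1$, vanishing for $x_1\leq R'$ (with $R'>R_0$ suitably chosen), smooth, and growing like $x_1^{1+2s}$ as $x_1\to+\infty$. Since $w$ is time-independent, $(\partial_t-\Delta)^s w=(-\Delta)^s w$, and a direct computation for the one-dimensional fractional Laplacian of such a profile gives $(-\Delta)^s w(x_1)=O(x_1)$ as $x_1\to+\infty$; after rescaling $w$ by a small constant one obtains $(-\Delta)^s w\leq (m/2)^p x_1$ on $\{x_1\geq R'\}$. Applying the maximum principle \eqref{IMP1} to $u-w$ on cylinders $\{x_1>R'\}\times(T,T+1]$, with the exterior inequality supplied by positivity of $u$ and vanishing of $w$ on $\{x_1\leq R'\}$, and the past-time inequality bootstrapped iteratively from the comparison on earlier time-slabs, yields $u\geq w$ on $\{x_1\geq R'\}\times\R$. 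Since $w(x_1)\to+\infty$ while $u$ is bounded, this is the sought contradiction.

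The principal obstacle is the lower-bound step: without strict monotonicity, showing $\inf U>0$ requires a careful limiting analysis of the master equation under spatial translations, handling both the spatial and temporal non-localities of $(\partial_t-\Delta)^s$ as well as the global-in-time behavior of $u$ at $t=\pm\infty$. A secondary technical point is extending the maximum principle \eqref{IMP1} from bounded cylinders to the unbounded half-space $\{x_1>R'\}$ and iterating in time so as to propagate the comparison over the entire real line, which is non-trivial precisely because of the past-time non-locality of the master operator.
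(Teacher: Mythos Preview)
Your overall strategy --- establish a uniform positive lower bound on a half-space, then compare with an unbounded spatial sub-solution --- diverges from the paper's and carries two genuine gaps. First, the lower-bound step is not justified: monotonicity in $x_1$ gives you pointwise limits $U(x',t)=\lim_{x_1\to\infty}u(x_1,x',t)>0$, but nothing prevents $\inf_{(x',t)}U=0$. Your ``Liouville-type argument'' is the entire content of this step and you have not supplied it; passing translates $u_R$ to a limit requires compactness estimates you have not invoked, and even if the limit were constant in $x_1$ there is no reason it should be constant (or bounded below) in $(x',t)\in\R^{n-1}\times\R$. Second, the comparison step is set on the unbounded domain $\{x_1>R'\}\times\R$, but the only maximum principles available here (Theorems~\ref{thm11} and~\ref{thm12}) require $\Omega$ bounded. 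Your proposed time-slab iteration does not bootstrap: for the master operator the past-time condition must hold on \emph{all} of $\R^n\times(-\infty,t_1]$, so there is no finite starting time from which to launch the induction. Your sub-solution $w(x_1)\sim x_1^{1+2s}$ also risks falling outside $\mathcal{L}(\R^n\times\R)$.

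The paper avoids all of this by staying on the bounded cylinder $B_1(Re_1)\times[1,T]$ and using the same time-growing sub-solution $v(x,t)=\phi_R(x)(t^\beta-1)$ as in Theorem~\ref{thm2}. The monotonicity hypothesis \eqref{further-c} is used only in a much weaker way: to transfer the compact-set lower bound $m_T:=\inf_{B_1(0)\times[1,T]}u>0$ to the shifted ball $B_1(Re_1)\times[1,T]$. The key algebraic trick for $0<p<1$ is not $u^p\geq m^p$ but rather $u^p\geq u\,M^{p-1}$ (since $u\leq M$), which makes the right-hand side linear in $u$; combined with $(\partial_t-\Delta)^s v\leq \lambda_1 v + C_s u/m_T$, one obtains $(\partial_t-\Delta)^s(u-v)\geq \lambda_1(u-v)$ for $R$ large, and Theorem~\ref{thm12} (not Theorem~\ref{thm11}) closes the argument exactly as before.
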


The monotonicity of the positive solution $u(x,t)$ in $x_1$-direction will be proved by the method of moving planes. Let
$T_\lm = \{ x \in \R^n \mid x_1 = \lm \}$
be the plane perpendicular to $x_1$-axis, and $\Sigma_\lm$ be the region to the left of $T_\lm$. For each fixed $t$, we compare the value of $u(x,t)$ with $u(x^\lm, t)$, its value at the reflection point about $T_\lm$. Consider $$w_\lm (x,t) = u_\lm (x^\lm, t)-u(x,t).$$ Our main task is to prove that
$$ w_\lm (x,t) \geq 0, \;\; \mbox{ for all } (x,t) \in \Sigma_\lm \times \R.$$
This is usually done by a contradiction argument at a negative minimum of $w_\lm$. However, here under our assumption, $w_\lm$ is only bounded and a minimizing sequence of $w_\lm$ may leak to infinity. A traditional way to circumvent this challenge, whether dealing with integer-order  or fractional order elliptic and parabolic equations,  is to construct a specific auxiliary function (see  \cite{CLZ}\cite{CM}  \cite{CWW}\cite{CZhu} \cite{DL} \cite{Lin} \cite{PQ} and the references therein)
$$\bar{w}_\lm = \frac{w_\lm}{g}  \mbox{ with } g(x) \to \infty \mbox{ as } |x| \to \infty.$$
Such a function $\bar{w}_\lm$ shares the same sign with $w_\lm$ and decays to zero at infinity. Now if $w_\lm$ is negative somewhere, then
$\bar{w}_\lm$ attains its minimum at some point $x^o \in \Sigma_\lm$.

In this paper, we introduce entirely different new approaches:

(i) Instead of dividing $w_\lm$ by a function $g$, we subtract it by a sequence of cutoff functions, so that the new auxiliary functions are able to attain their minima. Through estimating the singular integrals defining
the fully fractional heat operator on the auxiliary functions, we will be able to derive a contradiction in the case if $w_\lm$ is negative somewhere in $\Sigma_\lm$. This new idea remarkably simplify the proof process, and
we  believe that it will find broad applications in studying other elliptic and parabolic fractional problems.

(ii) In the previous literature (for instance, see \cite{CLZ} \cite{CWW}), in the second step of the moving planes, the authors took limit of a sequence of equations to arrive at a
limiting equation and derived a contradiction at such a limiting equation. In order the sequence of equations to converge, some additional regularity assumptions on the higher order derivatives are required. In this paper, we adapt a new approach, estimating the singular integrals to derive a contradiction just along a sequence of equations without taking the limit. This new idea enable us to weaken the regularity assumption on the solutions, and better still, it can be applied to unbounded solutions.

(iii) In the proof of the nonexistence of solutions--Theorem \ref{thm2}, the non-separable nature of the master operator $(\partial_t - \Delta)^s$ also poses some challenge as explained below.

In \cite{CWW} and \cite{PQ}, to prove the nonexistence of bounded positive solutions, the authors consider
$$ \psi_R(t):=\jf_{\R^n}u(x,t) \varphi_R(x)\,\!dx=\jf_{B_2\lt(Re_1\rt)}u(x,t) \varphi_R(x)\,\!dx, $$
where $\varphi_R(x)$ is related to the first eigenfunction of fractional Laplacian $(-\Delta)^s$ on $B_1(Re_1)$ with $e_1 = (1, 0, \cdots, 0)$.

From the separable nature of the operator $\partial_t + (-\Delta)^s$,
one has
$$ \frac{d \psi_R (t)}{d t} =  \jf_{\R^n} \partial_t u(x,t) \varphi_R(x)\,\!dx = \int_{\mathbb{R}^n} \left[ -(-\Delta)^s u(x,t) +  x_1 u^p(x,t) \right] \varphi_R(x) dx. $$
Consequently, they showed that
for $R$ sufficiently large,
$$ \frac{d \psi_R (t)}{d t} \geq \psi_R(t).$$
 It follows that
$$ \psi_R(t) \to \infty \mbox{ as } t \to \infty.$$
This contradicts the boundedness of the solution $u(x,t)$ and hence proves the nonexistence.

For our master operator $(\partial_t - \Delta)^s$, the time and space derivative cannot be separated, hence obviously the aforementioned method is not applicable.
To overcome this difficulty, we develop a completely different approach. Instead of relying on $\psi_R(t)$, we compare $u(x,t)$ directly with the following function
\[v(x,t)=\phi_R(x)\eta(t),\mb{ with }\eta(t)=t^\beta-1,\]
 where $\phi_R(x)$ the first eigenfunction of fractional Laplacian $(-\Delta)^s$ on $B_1(Re_1)$ and $0<\beta=\frac{1}{2k+1}<s$ for some positive integer $k$. We prove that
 $v(x,t)$ is a sub-solution in $B_1(Re_1)\tm[1,T]$ for any $T>1$ and therefore derive a contraction with the bounded-ness of the solution $u(x,t)$. This new approach is much more general, it can also be applied to the situations in \cite{CWW} and \cite{PQ}.

\bigskip

This paper will be organized as follows.

In Section 2, we present the definitions of the local parabolic H\"{o}lder space $C^{2s+\epsilon,s+\epsilon}_{x,\, t,\, {\rm loc}}(\mathbb{R}^n\times\mathbb{R})$ %  and some needed regularity results. We
 and also prove two maximum principles for the master operators.

In Section 3, we employ the method of moving planes to deduce the monotonicity of solutions and hence establish Theorem \ref{thm1}.

In Section 4, we construct a subsolution to derive the nonexistence of positive solutions and therefore prove Theorem \ref{thm2}, Theorem \ref{thm3}, and Theorem \ref{thm4}.

%Throughout this paper, we use $C$ and $C_i$ with $i=1,2,...$ to denote  general constants whose values may vary from line to line.

\section{Preliminaries}\label{5}
In this section, we collect definitions and derive auxiliary results that are needed in establishing our main theorems. Throughout this paper, $C$ will denote a positive constant whose value may vary from line to line.

We start by providing the definition of parabolic H\"{o}lder space
$$C^{2\alpha,\alpha}_{x,\, t}(\mathbb{R}^n\times\mathbb{R}),$$
which plays an essential role in ensuring  that the fully fractional heat operator $(\partial_t-\Delta)^s$ is well-defined (cf. \cite{Kry}). More precisely,
\begin{itemize}
\item[(i)]
For $0<\alpha\leq\frac{1}{2}$, we say that
$u(x,t)\in C^{2\alpha,\alpha}_{x,\, t}(\mathbb{R}^n\times\mathbb{R})$, if there exists a constant $C>0$ such that
\begin{equation*}
  |u(x,t)-u(y,\tau)|\leq C\left(|x-y|+|t-\tau|^{\frac{1}{2}}\right)^{2\alpha}
\end{equation*}
for any $x,\,y\in\mathbb{R}^n$ and $t,\,\tau\in \mathbb{R}$.
\item[(ii)]
For $\frac{1}{2}<\alpha\leq1$, we say that
$$u(x,t)\in C^{2\alpha,\alpha}_{x,\, t}(\mathbb{R}^n\times\mathbb{R}):=C^{1+(2\alpha-1),\alpha}_{x,\, t}(\mathbb{R}^n\times\mathbb{R}),$$ if $u$ is $\alpha$-H\"{o}lder continuous in $t$ uniformly with respect to $x$ and its gradient $\nabla_xu$ is $(2\alpha-1)$-H\"{o}lder continuous in $x$ uniformly with respect to $t$ and $(\alpha-\frac{1}{2})$-H\"{o}lder continuous in $t$ uniformly with respect to $x$.
\item[(iii)] While for $\alpha>1$, we say that
$u(x,t)\in C^{2\alpha,\alpha}_{x,\, t}(\mathbb{R}^n\times\mathbb{R}),$
if
$$\partial_tu,\, D^2_xu \in C^{2\alpha-2,\alpha-1}_{x,\, t}(\mathbb{R}^n\times\mathbb{R}).$$
\end{itemize}
In addition, we can analogously define the local parabolic H\"{o}lder space $C^{2\alpha,\alpha}_{x,\, t,\, \rm{loc}}(\mathbb{R}^n\times\mathbb{R})$.
\medskip

Next we establish  two (strong) maximum principle for the master operator $(\partial_t-\Delta)^s.$
\begin{theorem}\label{thm11}
Let $\Omega\subset\R^n$ be an open bounded set
and $[t_1, t_2]$ be an interval in $\R$. Suppose that $$w(x,t)\in C^{2s+\epsilon,s+\epsilon}_{x,\, t,\,{\rm loc}}(\R^n\times\R)\cap \mathcal{L}(\mathbb{R}^n\times\mathbb{R})$$ is a lower semi-continuous function on $\overline{\Omega}\times[t_1,t_2]$, satisfying
\begin{equation}\label{mp1000}
\begin{cases}
(\partial_t-\Delta)^s w(x,t)\geq 0,& \mb{ in } \Omega\tm(t_1,t_2], \\
w(x,t)
>0, &\mb{ in } (\R^n\tm(-\infty,t_2])\backslash (\Omega\tm(t_1,t_2]).
\end{cases}
\end{equation}
Then \be\label{sbp500}w(x,t)>0,\,\mb{for~all}\,(x,t)\in \Omega\tm(t_1,t_2].\ee
\end{theorem}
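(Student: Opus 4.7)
The plan is to argue by contradiction, exploiting the fully nonlocal structure of $(\partial_t - \Delta)^s$ to upgrade the exterior/past-time positivity directly to strict interior positivity in a single step, rather than passing through a separate weak maximum principle followed by a strong one.

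First I would suppose, for contradiction, that $w(x_0, t_0) \leq 0$ at some $(x_0, t_0) \in \Omega \times (t_1, t_2]$. Since $w$ is lower semi-continuous on the compact set $\overline{\Omega} \times [t_1, t_2]$, it attains its minimum there at some point $(\bar x, \bar t)$, with $w(\bar x, \bar t) \leq w(x_0, t_0) \leq 0$. The hypothesis $w > 0$ on $(\R^n \times (-\infty, t_2]) \setminus (\Omega \times (t_1, t_2])$ rules out the minimum being attained on $\partial \Omega \times [t_1, t_2]$ or on $\overline{\Omega} \times \{t_1\}$, so we must have $(\bar x, \bar t) \in \Omega \times (t_1, t_2]$.

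Next I would evaluate $(\partial_t - \Delta)^s w(\bar x, \bar t)$ directly from the singular integral representation \eqref{nonlocaloper}; the regularity $w \in C^{2s + \epsilon, s + \epsilon}_{x,\,t,\,\mathrm{loc}} \cap \mathcal L$ guarantees that the integral is well defined at $(\bar x, \bar t)$. I would split the integration domain $\R^n \times (-\infty, \bar t]$ into the two pieces $\Omega \times (t_1, \bar t]$ and its complement $\R^n \times (-\infty, t_1] \cup (\R^n \setminus \Omega) \times (t_1, \bar t]$. On the first piece, minimality gives $w(\bar x, \bar t) - w(y, \tau) \leq 0$; on the second piece, the hypothesis yields $w(y, \tau) > 0 \geq w(\bar x, \bar t)$, so the integrand is strictly negative. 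Since the heat kernel $(\bar t - \tau)^{-n/2 - 1 - s}\, e^{-|\bar x - y|^2 / (4(\bar t - \tau))}$ is strictly positive on $\R^n \times (-\infty, \bar t)$ and the second piece has positive Lebesgue measure, the whole singular integral is strictly negative, so $(\partial_t - \Delta)^s w(\bar x, \bar t) < 0$, contradicting the sub-equation in \eqref{mp1000}.

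The point I expect to require the most care is the finiteness of the singular integral at the minimum $(\bar x, \bar t)$: the Cauchy principal value in $y$ must be controlled by the $C^{2s + \epsilon}$-regularity in space (with the vanishing first-order variation at the minimum providing extra cancellation against the Gaussian factor), while the $C^{s + \epsilon}$ Hölder regularity in $t$ absorbs the $(\bar t - \tau)^{-1-s}$ singularity as $\tau \to \bar t^-$. Once these integrability checks are in hand, the sign computation above delivers the strict positivity $w > 0$ on $\Omega \times (t_1, t_2]$ in one stroke, which reflects the strongly non-local ``region $(I)$--$(III)$'' character of the master operator emphasized in the introduction: the positivity imposed on the entire past and exterior forces the test integral to tip strictly in the wrong direction the moment an interior non-positive value is assumed.
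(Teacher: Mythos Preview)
Your proposal is correct and follows essentially the same route as the paper: both argue by contradiction, locate a non-positive interior minimum of $w$, and evaluate the singular integral there to get $(\partial_t-\Delta)^s w<0$, contradicting the differential inequality. If anything, your write-up is a bit more careful than the paper's, since you justify via lower semi-continuity on the compact set $\overline{\Omega}\times[t_1,t_2]$ why the minimum is attained in $\Omega\times(t_1,t_2]$ and you explicitly treat the case $w(\bar x,\bar t)=0$, whereas the paper tacitly assumes a strictly negative infimum.
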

\begin{proof}[Proof]
If \eqref{sbp500} is not true, then there exists some $(x^0,t_0)\in\Omega\tm(t_1,t_2]$ such that
\[w(x^0,t_0)=\inf\limits_{\R^n\tm(-\infty,t_2]}w(x,t)<0.\]
Now by the definition of operator $(\partial_t-\Delta)^s$ and exterior condition in \eqref{mp1000} , we have
\[(\partial_t -\Delta)^s w(x^0, t_0)=C_{n,s}\jf_{-\infty}^{t_0}\jf_{\R^n}\frac{w (x^0,t_0)-w(y,\tau)}{(t_0-\tau)^{\frac{n}{2}+1+s}}e^{-\frac{|x^0-y|^2}{4(t_0-\tau)}}\operatorname{d}\!y\operatorname{d}\!\tau<0,\]
which contracts  differential inequality in \eqref{mp1000}.

Thus we verify \eqref{sbp500} and complete the proof of Theorem \ref{thm11}.
\end{proof}
\begin{theorem}\label{thm12}
Let $\Omega\subset\R^n$ be an open bounded set
and $[t_1, t_2]$ be an interval in $\R$. Suppose that   $$w(x,t)\in C^{2s+\epsilon,s+\epsilon}_{x,\, t,\,{\rm loc}}(\R^n\times\R)\cap \mathcal{L}(\mathbb{R}^n\times\mathbb{R})$$
is a lower semi-continuous function on $\overline{\Omega}\times[t_1,t_2]$, satisfying
\begin{equation}\label{sbp40}
\begin{cases}
(\partial_t-\Delta)^s w(x,t)\geq c(x,t) w(x,t),& \mb{ in } \Omega\tm(t_1,t_2], \\
w(x,t)
>0, &\mb{ in } (\R^n\tm(-\infty,t_2])\backslash (\Omega\tm(t_1,t_2]),
\end{cases}
\end{equation}
where $c(x,t)$ is continuous in $\Omega\tm(t_1,t_2]$. Then
\be\label{sbp50}w(x,t)>0,\,\mb{for~all}\,(x,t)\in \Omega\tm(t_1,t_2].\ee
\end{theorem}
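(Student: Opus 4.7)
The plan is to argue by contradiction, adapting the proof of Theorem~\ref{thm11} to absorb the lower-order term $c(x,t)w(x,t)$. Suppose \eqref{sbp50} fails. By lower semi-continuity on the compact cylinder $\overline{\Om}\tm[t_1,t_2]$, the function $w$ attains its infimum $m:=\min_{\overline{\Om}\tm[t_1,t_2]}w\leq 0$. The strict positivity of $w$ on the parabolic complement assumed in \eqref{sbp40} rules out the parabolic boundary, so the minimizing point $(x^0,t_0)$ lies in the open region $\Om\tm(t_1,t_2]$.

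Next I would quantify $(\pl_t-\Delta)^s w(x^0,t_0)$ in the spirit of Theorem~\ref{thm11}. Setting $A:=(\R^n\tm(-\infty,t_0])\setminus(\Om\tm(t_1,t_0])$, we have $w\geq m$ everywhere on $\R^n\tm(-\infty,t_0]$, and strictly $w>0\geq m$ on $A$. Hence every integrand in
\bee
(\pl_t-\Delta)^s w(x^0,t_0)=C_{n,s}\jf_{-\infty}^{t_0}\jf_{\R^n}\fr{m-w(y,\tau)}{(t_0-\tau)^{\frac{n}{2}+1+s}}e^{-\frac{|x^0-y|^2}{4(t_0-\tau)}}\,dy\,d\tau
\eee
is non-positive, and strictly negative on the positive-measure set $A$. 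Discarding the non-positive contribution from $\Om\tm(t_1,t_0]$ and using $w>0$ on $A$ to bound $m-w\leq m$ yields the quantitative estimate
\bee
(\pl_t-\Delta)^s w(x^0,t_0)\ \leq\ m\,J(x^0,t_0),\qquad J(x^0,t_0):=C_{n,s}\iint_{A}\fr{e^{-\frac{|x^0-y|^2}{4(t_0-\tau)}}}{(t_0-\tau)^{\frac{n}{2}+1+s}}\,dy\,d\tau>0.
\eee
A Gaussian computation on the sub-region $\R^n\tm(-\infty,t_1]\subset A$ shows that $J(x^0,t_0)$ is finite and bounded below by a positive multiple of $(t_0-t_1)^{-s}$.

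I would then split into cases. If $m=0$, the hypothesis \eqref{sbp40} gives $(\pl_t-\Delta)^s w(x^0,t_0)\geq c(x^0,t_0)\cdot 0=0$, contradicting the strict inequality $(\pl_t-\Delta)^s w(x^0,t_0)<0$ established above. If $m<0$, combining $(\pl_t-\Delta)^s w(x^0,t_0)\geq c(x^0,t_0)m$ with the displayed estimate and dividing by the negative number $m$ produces
\bee
c(x^0,t_0)\ \geq\ J(x^0,t_0).
\eee
This inequality, rather than an outright contradiction, is the principal new obstacle compared with Theorem~\ref{thm11}, and I expect it to be the hard part of the argument. I would resolve it by a time-slab bootstrap: choose $h>0$ small enough that the lower bound $J\gtrsim h^{-s}$ dominates $\sup_{\overline{\Om}\tm[t_1,t_1+h]}|c|$ (finite by continuity of $c$ on compacta), thereby forcing the contradiction on the first slab and obtaining \eqref{sbp50} on $\Om\tm(t_1,t_1+h]$; then finitely many iterations, each using the positivity just obtained as the past condition for the next slab, propagate the conclusion up to $t_2$.
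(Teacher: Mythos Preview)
Your proposal is workable in spirit but takes an unnecessarily circuitous route, and the $m<0$ branch carries a gap under the stated hypotheses. The paper's proof avoids the entire $m<0$ case by a first-touching-time argument: rather than taking the global minimum over $\overline{\Om}\tm[t_1,t_2]$, one lets $t_0$ be the infimum of times at which $w$ fails to be positive somewhere in $\Om$. Continuity of $w$ (from the $C^{2s+\epsilon,s+\epsilon}_{\rm loc}$ assumption) then yields $w(x,t)\geq 0$ throughout $\R^n\tm(-\infty,t_0]$ together with $w(x^0,t_0)=0$ for some $x^0\in\Om$. At this point the lower-order term $c(x^0,t_0)\,w(x^0,t_0)$ vanishes identically, and the contradiction is exactly your $m=0$ computation---the integrand is non-positive everywhere and strictly negative on the exterior set, forcing $(\pl_t-\Delta)^s w(x^0,t_0)<0$, against the inequality $\geq 0$. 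In other words, your $m=0$ analysis already contains the full proof once one observes that the first-touching-time device always places you in that case; no control on $c$ and no iteration are needed.

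Your time-slab bootstrap for the $m<0$ case is therefore superfluous, and as written it does not close: you invoke $\sup_{\overline{\Om}\tm[t_1,t_1+h]}|c|<\infty$, yet the hypothesis only has $c$ continuous on the open set $\Om\tm(t_1,t_2]$, so $c$ may blow up toward $\pl\Om$ or as $t\downarrow t_1$. Since the minimizing point $(x^0,t_0)$ of each slab depends on $h$ and may drift toward this boundary as $h\to 0$, a uniform choice of $h$ making $Ch^{-s}>c(x^0,t_0)$ across all slabs is not guaranteed. The first-touching-time trick sidesteps this entirely because the coefficient is multiplied by zero.
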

\begin{proof}[Proof]
If \eqref{sbp50} is violated, then there must exist a first time $t_0\in(t_1,t_2]$ such that
\[w(x^0,t_0)=0, \mb{ for some } x^0\in\Omega,\]and
\[w(x,t)\geq0, \mb{ for all } x\in \R^n, t\in(-\infty,t_0].\]
Then by the definition of operator $(\partial_t-\Delta)^s$ and the exterior condition in \eqref{sbp40} , we have
\[(\partial_t -\Delta)^s w(x^0, t_0)=C_{n,s}\jf_{-\infty}^{t_0}\jf_{\R^n}\frac{w (x^0,t_0)-w(y,\tau)}{(t_0-\tau)^{\frac{n}{2}+1+s}}e^{-\frac{|x^0-y|^2}{4(t_0-\tau)}}\operatorname{d}\!y\operatorname{d}\!\tau<0,\]
which contracts  the differential inequality in \eqref{sbp40}, i.e. ,
\[(\partial_t -\Delta)^s w(x^0, t_0)\geq c(x^0,t_0) w(x^0,t_0)=0.\]
Now we arrive at  \eqref{sbp50} and   hence complete the proof of Theorem \ref{thm12}.
\end{proof}

\section{Monotonicity of solutions}
Consider \begin{equation}\label{2.1}
(\partial_t -\Delta)^{s} u(x,t) =  x_1u^p(x,t)\ \ \mbox{in}\ \ \R^n\times\R.
\end{equation}
We will use the direct method of moving planes to show that all positive solutions must be strictly monotone increasing along $x_1$ direction and thus prove Theorem \ref{thm1}.

 Before presenting   the main proof, we introduce the notation that will be used throughout the subsequent sections.

Let $$T_\lambda = \{(x_1, x') \in \mathbb{R}^n \mid x_1 = \lambda\},\, \lambda \in \mathbb{R}$$
be a moving planes perpendicular to the $x_1$-axis,
$$\Sigma_\lambda = \{x \in \mathbb{R}^n \mid x_1 < \lambda\}$$ be the region to the left of the hyperplane $T_\lambda$ in $\mathbb{R}^n$, and
$$x^\lambda = (2\lambda - x_1, x_2, \ldots, x_n)$$
be the reflection of $x$ with respect to the hyperplane $T_\lambda$.

Assume that  $u$ is a solution of pseudo differential equation \eqref{2.1}. To compare the values of $u(x,t)$ with $u_\lambda(x, t) = u(x^\lambda, t)$, we define $$w_\lambda(x, t) = u_\lambda(x, t) - u(x, t).$$ It is evident that $w_\lambda(x, t)$ is an antisymmetric function of $x$ with respect to the hyperplane $T_\lambda$.

It follows from equation \eqref {2.1} that
\be\label{maineq}
  \left.
\begin{array}{ll}
    (\partial_t-\Delta)^s w_\lm(x,t)&=x_1^\lm u_{\lm}^p(x,t)-x_1u^p(x,t)\\[0.2cm]
    &=[x_1^\lm-x_1]u^p_\lm+x_1[u^p_\lm-u^p]\\[0.2cm]
    &\geq px_1\xi^{p-1}_\lm(x,t) w_\lm(x,t),\end{array}
\right.
\ee
where $\xi_\lm$ lies between $u$ and $u_\lm$.

 We want to show that
\bee w_\lm (x,t) \geq 0, \;\forall (x,t)\in\Sigma_\lm\tm\R,\;\forall\lm\in\R.\eee

To this end, usually a contradiction argument is used. Suppose $w_\lm$ attains a negative minimum in $\Sigma_\lm$, then one would derive directly a contradiction with differential  inequality \eqref{maineq}. However, here, we only assume that $u$ is bounded without any decay condition  near infinity, which cannot  prevent the minimizing sequence of $w_\lm$ from leaking to infinity. To overcome this difficulty,  traditionally a common approach is to construct a specific auxiliary function
$$\bar{w}_\lm = \frac{w_\lm}{g}  \mbox{ with } g(x) \to \infty \mbox{ as } |x| \to \infty.$$

  In \cite{CLZ}, while addressing fractional elliptic equations, by exploiting a key estimate established on $w_\lambda$ at the negative minimum  $x^o$  of above $\bar{w}_\lm$:
\be \label{1003}
(-\Delta)^s w_\lambda (x^o) \leq \frac{C}{|(x^o)_1-\lambda|^{2s}}w_\lambda (x_o),
\ee the authors derived a contradiction.

While for fractional parabolic equations with integer order time derivative
$$ \partial_t u(x,t) + (-\lap)^s u(x,t) = x_1 u^p(x,t), \;\; (x,t) \in \mathbb{R}^n \times \mathbb{R},$$
the authors in \cite{CWW} modified the techniques in \cite{CLZ} so that they can be applied to fractional parabolic equations. They also relied on a similar estimate  as in  \eqref{1003}.

In this paper, we introduce a entirely different new approach. Instead of dividing $w_\lm$ by a function $g$, we subtract it by a sequence of cutoff functions, so that the new auxiliary functions are able to attain their minima. Through estimating the singular integrals defining
 the fully fractional heat operator on the auxiliary functions, we will be able to derive a contradiction in the case if $w_\lm$ is negative somewhere in $\Sigma_\lm$.

\begin{proof}[Proof of Theorem \ref{thm1}] \,

The proof will be accomplished in three steps.
\smallskip

In Step 1, we show that for $\lm\leq 0$, it holds
\be \label{mp} w_\lm(x,t) \geq 0, \;\forall (x,t)\in\Sigma_\lm\tm\R.\ee
This provides a starting point to move the plane.
\smallskip

In Step 2, we move the plane $T_\lm$ along $x_1$ direction as long as the above inequality holds.
We prove that this plane can be moved all the way to $\lm = \infty,$ that is, \eqref{mp} holds for all
real number $\lm$.
\smallskip

In Step 3, we further derive the strict inequality
$$  w_\lm(x,t) > 0, \;\forall (x,t)\in\Sigma_\lm\tm\R,\;\forall\lm\in\R,$$
which implies that for each fixed $t$, $u(x,t)$ is strictly increasing in $x_1$ direction. This result can be viewed as
a strong maximum principle.
\medskip

Now we carry out the details.
\medskip

Step 1. We argue by  contradiction. If \eqref{mp} is not true, since $u$ is bounded, then there exist   some $\lambda\leq 0$  and  a constant $A>0$ such that
\begin{equation}\label{2.3}
\inf\limits_{(x,t)\in \Sigma_\lm\times \R}w_\lambda(x,t):=-A<0.
\end{equation}

If a minimum of $w_\lambda$ is attained at some point $(x^o, t_o)$, then we can derive a contradiction immediately at that point. Unfortunately, this is usually not the case, because both $x$ and $t$ are in unbounded domains.

Nonetheless, there exist a sequence of approximate minima $\{(x^k,t_k)\}\subset \Sigma_\lm\times\R$ and a  sequence $\{\varepsilon_k\}\searrow 0$ such that
%\[w_\lm(x^k,t_k)\to -A\ \mbox{as}\ k\to \infty.\]
%More accurately, there exists a  sequence $\{\varepsilon_k\}\searrow 0$ such that
\begin{equation}\label{2.4}
w_\lm(x^k,t_k)=-A+\varepsilon_k<0.
\end{equation}

To obtain a sequence of functions that attain their minima, we make a perturbation  of  $w_\lm$ near $(x^k,t_k)$ as the following
\begin{equation}\label{2.5}
v_k(x,t)=w_\lm(x,t)-\varepsilon_k\eta_k(x,t) \ \mbox{in}\ \R^n\times \R,
\end{equation}
where
\[\eta_k(x,t)=\eta\lt(\fr{x-x^k}{r_k},\fr{t-t_k}{r_k^2}\rt),\]
with $r_k=\frac{1}{2}{\rm {dist}}(x^k,T_{\lm})>0$ and % (\underline{only depend on $(x^k,t_k)$}),
  $\eta\in C^\infty_0(\R^n\times\R)$ is a cut-off smooth function satisfying
\begin{equation*}
\left\{\begin{array}{r@{\ \ }c@{\ \ }ll}
0\leq \eta\leq 1 &\mbox{in}&\ \ \R^n\times\R\,, \\[0.05cm]
 \eta= 1 &\mbox{in}&\ \ B_{1/2}(0)\times[-\fr{1}{2},\fr{1}{2}]\,, \\[0.05cm]
\eta= 0 &\mbox{in}&\ \ \left(\R^n\times(-\infty,1]\right) \backslash\left(B_{1}(0)\times[-1,1]\right)\,. \\[0.05cm]
\end{array}\right.
\end{equation*}
Denote
\[Q_k(x^k,t_k):=B_{r_k}(x^k)\times\lt[t_k-r_k^{2}, t_k+r_k^{2}\rt]\subset\Sigma_\lm\tm\R,\]
the parabolic cylinder centered at $(x^k, t_k)$.

 By \eqref{2.3}, \eqref{2.4} and \eqref{2.5}, we have
\begin{equation*}
\left.\begin{array}{r@{\ \ }c@{\ \ }ll}
v_k(x^k,t_k)&=&-A\,, \\[0.15cm]
v_k(x,t)=w_\lambda(x,t)&\geq& -A\ \mbox{in}\ \left(\Sigma_\lm\times\R\right) \backslash Q_k(x^k,t_k)\,, \\[0.15cm]
v_k(x,t)=-\varepsilon_k\eta_k(x,t) &>& -A\ \mbox{on}\ \ T_\lm\tm\R\,. \\[0.05cm]
\end{array}\right.
\end{equation*}
This implies that each $v_k$ must attain its minimum which is at most $-A$ at some point $(\bar{{x}}^k,\bar{t}_k)$ in the parabolic cylinder $\overline{Q_k(x^k,t_k)}\subset{\Sigma}_\lm\times \R$, that is, %there exists a sequence
%$\exists\{(\bar{{x}}^k,\bar{t}_k)\}\subset\overline{Q_r(x^k,t_k)}\cap(\Omega\times \R), s.t.$ %such that
\begin{equation}\label{2.6}
\exists\ \{(\bar{{x}}^k,\bar{t}_k)\}\subset\overline{Q_k(x^k,t_k)}\ \   \ s.t.\ \  \   -A-\varepsilon_k\leq v_k(\bar{{x}}^k,\bar{t}_k)=\inf\limits_{\Sigma_\lm\times\R}v_k(x,t)\leq -A.
\end{equation}
Here we have used  \eqref{2.3} and \eqref{2.5}.
It follows that
\begin{equation}\label{mp1}
\left.\begin{array}{r@{\ \ }c@{\ \ }ll}
-A&\leq &w_\lambda(\bar{x}^k,\bar{t}_k)\leq-A+\varepsilon_k<0\,.
\end{array}\right.
\end{equation}

In addition, starting from the definition of operator $(-\Delta)^s$ and utilizing the antisymmetry of
$w_\lambda$ in $x$ as well as  the fact that
$$\abs{\bar{x}^k-y^\lm}>\abs{\bar{x}^k-y}\mb{ for }y\in \Sigma_\lambda,$$ and by \eqref{2.6}, we arrive at
\begin{equation}\label{2.8}
\left.\begin{array}{r@{\ \ }c@{\ \ }ll}
(\partial_t-\Delta)^s v_k(\bar{x}^k,\bar{t}_k)&=&C_{n,s}\jf_{-\infty}^{\bar{t}_k}\jf_{\R^{n}}\frac{v_k(\bar{x}^k,\bar{t}_k)-v_k(y,\tau)}{(\bar{t}_k-\tau)^{\frac{n}{2}+1+s}}e^{-\frac{|\bar{x}^k-y|^2}{4(\bar{t}_k-\tau)}}\operatorname{d}\!y\operatorname{d}\!\tau \\[0.3cm]
&=&C_{n,s}\jf_{-\infty}^{\bar{t}_k}\jf_{\Sigma_\lm}\frac{v_k(\bar{x}^k,\bar{t}_k)-v_k(y,\tau)}{(\bar{t}_k-\tau)^{\frac{n}{2}+1+s}}e^{-\frac{|\bar{x}^k-y|^2}{4(\bar{t}_k-\tau)}}\operatorname{d}\!y\operatorname{d}\!\tau \\[0.3cm]&\,&\,+C_{n,s}\jf_{-\infty}^{\bar{t}_k}\jf_{\Sigma_\lm}\frac{v_k(\bar{x}^k,\bar{t}_k)-v_k(y^\lm,\tau)}{(\bar{t}_k-\tau)^{\frac{n}{2}+1+s}}e^{-\frac{|\bar{x}^k-y^\lm|^2}{4(\bar{t}_k-\tau)}}\operatorname{d}\!y\operatorname{d}\!\tau \\[0.3cm]
&\leq&C_{n,s}\jf_{-\infty}^{\bar{t}_k}\jf_{\Sigma_\lm}\frac{2v_k(\bar{x}^k,\bar{t}_k)-v_k(y,\tau)-v_k(y^\lambda,\tau)}{(\bar{t}_k-\tau)^{\frac{n}{2}+1+s}}e^{-\frac{|\bar{x}^k-y^\lm|^2}{4(\bar{t}_k-\tau)}}\operatorname{d}\!y\operatorname{d}\!\tau \\[0.3cm]
&\leq&C_{n,s}2\lt(v_k(\bar{x}^k,\bar{t}_k)+\ve_k\rt)\jf_{-\infty}^{\bar{t}_k}\jf_{\Sigma_\lambda}\frac{1}{(\bar{t}_k-\tau)^{\frac{n}{2}+1+s}}e^{-\frac{|\bar{x}^k-y^\lm|^2}{4(\bar{t}_k-\tau)}}\operatorname{d}\!y\operatorname{d}\!\tau
\\
&\leq&\fr{C_1(-A+\ve_k)}{r_k^{2s}}.\end{array}\right.
\end{equation}
Hence, a combination of \eqref{2.5} and  \eqref{2.8} yields  that
\begin{eqnarray*}\nonumber
(\partial_t-\Delta)^sw_\lm(\bar{x}^k,\bar{t}_k)&=&(\partial_t-\Delta)^s v_k(\bar{x}^k,\bar{t}_k)+\varepsilon_k(\partial_t-\Delta)^s \eta_k(\bar{x}^k,\bar{t}_k)
 \\  \nonumber
&\leq&\fr{C_1(-A+\ve_k)}{r_k^{2s}}+\fr{C_2\ve_k}{r_k^{2s}}
\\
&\leq& \fr{C(-A+\ve_k)}{r_k^{2s}},
\end{eqnarray*}
where we have used the following  estimates
\[(\partial_t-\Delta)^s \eta_k(\bar{x}^k,\bar{t}_k)\leq \fr{C_2}{r_k^{2s}}.\]
One can  find  the detailed proof  in  corollary2.2 in \cite{CM1}.

Then, together with differential inequality \eqref{maineq} and \eqref{mp1} as well as the fact that $\lm\leq 0,$ we derive
\be\label{mp2}\fr{C(-A+\ve_k)}{r_k^{2s}}\geq p\bar{x}_1^k\xi^{p-1}_\lm(\bar{x}^k,\bar{t}_k) w_\lm(\bar{x}^k,\bar{t}_k)\geq 0.\ee
 This directly implies a contradiction and hence verifies \eqref{mp}.
\medskip

Step 2. Inequality \eqref{mp} provides a starting point to move the plane.
Now we move plane $T_\lm$ towards  the right along $x_1-$direction as long as the inequality \eqref{mp} holds to its limiting position $T_{\lm_0}$ with $\lm_0$ defined by
\[\lm_0:=\sup\{\lm\geq 0:w_\mu\geq0\,\,\text{in}\,\Sigma_\mu\times\R,\mu\leq\lm\}.\]

 We will show that $$\lm_0=+\infty.$$
Otherwise, if $\lm_0<+\infty,$ then by its definition, there exist a sequence $\{\lm_k\}\searrow\lm_0$ and a sequence of positive numbers $\{m_k\}$ such that
\be
\inf\limits_{(x,t)\in \Sigma_{\lm_k}\times \R}w_{\lambda_k}(x,t):=-m_k<0.
\ee

First, we show that \be \label{mf4}m_k\to 0,\,\mb{as}\,\,k \to \infty.\ee
If not, there is a subsequence, still denoted  by $\{m_k\},$ such that $$-m_k<-M \mb{ for some } M>0.$$ Consequently, there exists a sequence $\{y^k,s_k\}\subset\Sigma_{\lambda_k}\tm\R$ such that \be\label{mp3}w_{\lambda_k}(y^k,s_k)\leq -M<0.\ee
Case 1. If $y^k\in \Sigma_{\lm_k}\backslash\Sigma_{\lm_0}$, then by virtue of $\lm_k\to\lm_0,$ we have $$\abs{y^k-(y^k)^{\lm_k}}=2\abs{\lm_k-y^k_1}\to 0, \,\mb{as}\,k\to\infty.$$
The regularity result in \cite{ST} implies that  $u$ is   uniformly continuous,  and hence
\[w_{\lambda_k}(y^k,s_k)=u((y^k)^{\lm_k},s_k)-u(y^k,s_k)\to 0, \,\mb{as}\,\,k\to \infty.\]
Case 2. If $y^k\in \Sigma_{\lm_0}$, then combining the fact that  $\lm_k\to\lm_0,$  the   uniform continuity of $u$, and the definition of $\lm_0$, we deduce
\begin{eqnarray*}\nonumber w_{\lambda_k}(y^k,s_k)&=&u((y^k)^{\lm_k},s_k)-u((y^k)^{\lm_0},s_k)+w_{\lm_0}(y^k,s_k)\\
&\geq& u((y^k)^{\lm_k},s_k)-u((y^k)^{\lm_0},s_k)\to 0, \,\mb{as}\,\,k\to \infty.
\end{eqnarray*}
In the above two possible cases, we all derive contradictions with \eqref{mp3} and thus verify \eqref{mf4}.

Now from \eqref{mf4}, there exists a sequence $\{(x^k,t_k)\}\subset \Sigma_{\lm_k}\tm\R$ such that
\[w_{\lambda_k}(x^k,t_k)=-m_k+m_k^2<0.\]
Making a perturbation of $w_{\lm_k}$ near $(x^k,t_k)$ as follows
\begin{equation*}
v_k(x,t)=w_{\lm_k}(x,t)-m_k^2\eta_k(x,t), \;\; \; (x,t) \in \R^n\times \R,
\end{equation*}
where $\eta_k$ is defined as in Step 1 with $r_k=\frac{1}{2}{\rm dist} \{x^k,T_{\lm_k}\}.$\\

Denote
\[P_k(x^k,t_k):=B_{r_k}(x^k)\times\lt[t_k-r_k^{2}, \, t_k+r_k^{2}\rt]\subset\Sigma_{\lm_k}\tm\R.\]
Each $v_k$ must attain its minimum which is at most $-m_k$ in $\overline{P_k(x^k,t_k)}\subset{\Sigma}_{\lm_k}\times \R$, that is, %there exists a sequence
%$\exists\{(\bar{{x}}^k,\bar{t}_k)\}\subset\overline{Q_r(x^k,t_k)}\cap(\Omega\times \R), s.t.$ %such that
\begin{equation*}
\exists\ \{(\bar{{x}}^k,\bar{t}_k)\}\subset\overline{P_k(x^k,t_k)}\ \   \ s.t.\ \  \   -m_k-m_k^2\leq v_k(\bar{{x}}^k,\bar{t}_k)=\inf\limits_{\Sigma_{\lm_k}\times\R}v_k(x,t)\leq -m_k.
\end{equation*}
This implies that
\begin{equation}\label{mf5}
-m_k\leq w_{\lm_k}(\bar{{x}}^k,\bar{t}_k)\leq -m_k+m_k^2<0.
\end{equation}

Using a similar argument as in Step 1, we derive
\be\label{mp6}\fr{C(-m_k+m_k^2)}{r_k^{2s}}\geq p\bar{x}_1^k\xi^{p-1}_{\lm_k}(\bar{x}^k,\bar{t}_k) w_{\lm_k}(\bar{x}^k,\bar{t}_k).\ee
If  $\bar{x}_1^k\leq0,$ then we have done by virtue of the proof in Step 1.
 Now we  assume that $0<\bar{x}_1^k\leq \lm+1$ for sufficiently large $k$.

 Then by \eqref{mf5} and \eqref{mp6}, we further obtain
\be\label{mp7}{C(1-m_k)}\leq p\bar{x}_1^kr_k^{2s}\xi^{p-1}_{\lm_k}(\bar{x}^k,\bar{t}_k).\ee
 Taking into account that $u$ is bounded and $p>1$, we arrive at \be\label{mp8}\bar{x}_1^k, \; r_k, \; u(\bar{x}^k,\bar{t}_k)\geq c>0,\ee for sufficiently large $k$.

%we conclude  from \eqref{mp8}  that \be\label{mp99}a\lt((\bar{x}^k)^{\lm_k}\rt)-a(\bar{x}^k)\geq c>0.\ee
Owing to  $w_\lm(\bar{x}^k,\bar{t}_k)\to 0 \,\mb{as}\,\, k\to\infty,$  it follows  that  \be\label{mp10}u_{\lm_k}(\bar{x}^k,\bar{t}_k)\geq c>0,\ee for sufficiently large $k.$

More accurately, from the initial equation \eqref{maineq}, we are able to modify \eqref{mp6} as
\begin{eqnarray}\label{mp9}& &\fr{C(-m_k+m_k^2)}{r_k^{2s}}\geq 2\lt[{\lm_k}-\bar{x}_1^k\rt]u^p_{\lm_k}(\bar{x}^k,\bar{t}_k)+ p\bar{x}_1^k\xi^{p-1}_{\lm_k}(\bar{x}^k,\bar{t}_k) w_{\lm_k}(\bar{x}^k,\bar{t}_k).\end{eqnarray}

Now
a combination of \eqref{mf5}, \eqref{mp8}, \eqref{mp9}, \eqref{mp10} and the fact that  $|\lm_k-\bar{x}^k_1|\sim r_k$, yields a contradiction, and hence we must have $\lm_0=+\infty.$ This completes Step 2.
\medskip

Step 3. In the above step, we have shown that
$$  w_\lm(x,t) \geq  0, \;\forall (x,t)\in\Sigma_\lm\tm\R,\;\forall\lm\in\R.$$
Now, we will further prove that the strict inequality holds:
\be \label{mp>}  w_\lm(x,t) >  0, \;\forall (x,t)\in\Sigma_\lm\tm\R,\;\forall\lm\in\R. \ee

Otherwise, for some fixed $\lm$ there exists a point $(x^o, t_o)$ in $\Sigma_\lm \times \mathbb{R}$, such that
$$ w_\lm (x^o, t_o) = \min_{\Sigma_\lm \times \mathbb{R}} w_\lm (x,t) = 0.$$
Then from differential inequality  \eqref{maineq}, we derive
$$(\partial_t-\lap)^s w_\lm (x^o, t_o) \geq 0. $$
On the other hand, at a minimum point $(x^o, t_o)$ in $\Sigma_\lm \times \mathbb{R}$, we must have

\begin{eqnarray}\label{1010}
 (\partial_t-\Delta)^s w_\lm (x^o, t_o)&=&C_{n,s}\jf_{-\infty}^{t_o}\jf_{\R^{n}}\frac{w_\lm (x^o, t_o)-w_\lm(y,\tau)}{(t_o-\tau)^{\frac{n}{2}+1+s}}e^{-\frac{|x^o-y|^2}{4(t_o-\tau)}}\operatorname{d}\!y\operatorname{d}\!\tau\nonumber\\
&=&C_{n,s}\jf_{-\infty}^{t_o}\jf_{\Sigma_\lm}\frac{w_\lm (x^o, t_o)-w_\lm(y,\tau)}{(t_o-\tau)^{\frac{n}{2}+1+s}}e^{-\frac{|x^o-y|^2}{4(t_o-\tau)}}\operatorname{d}\!y\operatorname{d}\!\tau
\nonumber\\&\quad&+C_{n,s}\jf_{-\infty}^{t_o}\jf_{\Sigma_\lm}\frac{w_\lm (x^o, t_o)-w_\lm(y^\lm,\tau)}{(t_o-\tau)^{\frac{n}{2}+1+s}}e^{-\frac{|x^o-y^\lm|^2}{4(t_o-\tau)}}\operatorname{d}\!y\operatorname{d}\!\tau
\nonumber\\&=&C_{n,s}\jf_{-\infty}^{t_o}\jf_{\Sigma_\lm}\fr{w_\lm (x^o, t_o)-w_\lm(y,\tau)}{(t_o-\tau)^{\frac{n}{2}+1+s}}[e^{-\frac{|x^o-y|^2}{4(t_o-\tau)}}-e^{-\frac{|x^o-y^\lm|^2}{4(t_o-\tau)}}]\operatorname{d}\!y\operatorname{d}\!\tau
\nonumber\\&\quad&+2C_{n,s}w_\lm (x^o, t_o)\jf_{-\infty}^{t_o}\jf_{\Sigma_\lm}\frac{1}{(t_o-\tau)^{\frac{n}{2}+1+s}}e^{-\frac{|x^o-y^\lm|^2}{4(t_o-\tau)}}\operatorname{d}\!y\operatorname{d}\!\tau\nonumber\\
&=&C_{n,s}\jf_{-\infty}^{t_o}\jf_{\Sigma_\lm}\fr{w_\lm (x^o, t_o)-w_\lm(y,\tau)}{(t_o-\tau)^{\frac{n}{2}+1+s}}[e^{-\frac{|x^o-y|^2}{4(t_o-\tau)}}-e^{-\frac{|x^o-y^\lm|^2}{4(t_o-\tau)}}]\operatorname{d}\!y\operatorname{d}\!\tau
\nonumber\\&\leq& 0.\nonumber\end{eqnarray}
Consequently,
\begin{eqnarray*}C_{n,s}\jf_{-\infty}^{t_o}\jf_{\Sigma_\lm}\fr{w_\lm (x^o, t_o)-w_\lm(y,\tau)}{(t_o-\tau)^{\frac{n}{2}+1+s}}[e^{-\frac{|x^o-y|^2}{4(t_o-\tau)}}-e^{-\frac{|x^o-y^\lm|^2}{4(t_o-\tau)}}]\operatorname{d}\!y\operatorname{d}\!\tau=0,\end{eqnarray*}
which implies that
$$ w_\lm (x, t) \equiv 0, \;\; \forall \, (x, t) \in \Sigma_\lm \times (-\infty, t_o].$$
Therefore,
$$ u_\lm(x,t) \equiv u(x,t), \; \forall \, (x, t) \in \mathbb{R}^n \times (-\infty, t_o].$$
This contradicts the second equality in \eqref{maineq}:
$$ (\partial_t -\Delta)^s w_\lm(x,t) = [x_1^\lm-x_1]u_\lm^p+x_1[{u_\lm^p-u^p}],
$$
since  the right hand side of the above
is greater than zero while the left hand side is zero.

Therefore \eqref{mp>} must be valid and $u(x,t)$ is strictly monotone increasing along the $x_1$-direction. This completes the proof of Theorem\ref{thm1}.%\be\label{mp10}\fr{C(m_k-m_k^2)}{r_k^{2s}}+\lt[a\lt((\bar{x}^k)^{\lm_k}\rt)-a(\bar{x}^k)\rt]f\lt(u^{\lm_k}(\bar{x}^k,\bar{t}_k)\rt)\leq a(\bar{x}^k)M_{\lm_k}(\bar{x}^k,\bar{t}_k)m_k.\ee
\end{proof}
\medskip

\section{Non-existence of solutions}
In the previous  section, we  have shown that each positive solution $u(x,t)$ of
\begin{equation}\label{3.1}
(\partial_t-\Delta)^{s} u(x,t) =  x_1u^p(x,t)\ \ \mbox{in}\ \ \R^n\times\R,
\end{equation} is monotone increasing along $x_1$-direction. Based on this, in this section, we will derive a contradiction   to establish the non-existence of positive solutions to  \eqref{3.1} first in the case of $1<p<+\infty$ and hence prove Theorem \ref{thm2}. Then  by applying a different approach, we extend this non-existence result to   the other two cases: $p<0$ and $0<p<1.$
\medskip

\begin{proof}[Proof of Theorem \ref{thm2}]

Let $\lm_1\mb{ and }\phi$ be the first eigenvalue and eigenfunction of the problem
 \begin{equation}
\begin{cases}
(-\Delta)^s\phi(x)=\lm_1\phi(x), &x\in B_1(0), \\
\phi(x)=0, &x\in B^c_1(0).
\end{cases}
\end{equation}

We may assume that
\be\label{mp12}\max\limits_{\R^n}\phi(x)=1.\ee
For any $R\geq 1$, denote \be\label{phi}\phi_R(x)=\phi(x-Re_1),\ee
where $e_1$ is a unit vector in $x_1$-direction.

Let \[v(x,t)=\phi_R(x)\eta(t)\mb{ with }\eta(t)=t^\beta-1,\]
 where $0<\beta=\frac{1}{2k+1}<s$ for some positive integer $k$.
\begin{center}
\begin{tikzpicture}[scale=2]
 \draw[blue!30,fill=blue!30] (1,0) rectangle (2,1);
  \draw[blue!30,fill=blue!30] (-1,0) rectangle (-2,1);
  \draw[yellow!30,fill=yellow!30] (-1,0) rectangle (1,-1.5);
  \draw[green!30,fill=blue!30] (1,1) rectangle (2,-1.5);
  \draw[green!30,fill=blue!30] (-1,1) rectangle (-2,-1.5);
\draw [thick]  [black] [->, thick](-2.2,-0.5)--(2.2,-0.5) node [anchor=north west] {$x$};
\draw [thick]  [black!80][->, thick] (0,-1.5)--(0,1.5) node [black][ above] {$t$};
\path [blue]node at (-0.1,0.4) {$B_1(Re_1)\tm[1,T]$};

\draw [thick] [dashed] [blue] (-2,1)--(2,1);
\draw [thick] [dashed] [blue] (1,1)--(1,-1.5);
\draw [thick] [dashed] [blue] (-1,1)--(-1,-1.5);

\draw [thick] [red] (-1,0)--(1,0);
\draw [thick]  [red] (1,0)--(1,1);
\draw [thick]  [red] (-1,0)--(-1,1);
\draw[fill=red] (0,0) circle (0.02);
\draw[fill=red] (0,1) circle (0.02);
\node at (0.1,-0.1) {$1$};
\node at (0.1,1.12) {$T$};
%\node at (-0.85,-0.7) {$-1$};
\path  (1.5,0.5) [purple][semithick] node [ font=\fontsize{10}{10}\selectfont] {$v(x,t)=0$};
\path  (-1.5,0.5) [purple][semithick] node [ font=\fontsize{10}{10}\selectfont] {$v(x,t)= 0$};
\path  (-0.1,-0.8) [purple][semithick] node [ font=\fontsize{10}{10}\selectfont] {$v(x,t)\leq 0$};
\path  (1.5,-0.8) [purple][semithick] node [ font=\fontsize{10}{10}\selectfont] {$v(x,t)=0$};
\path  (-1.5,-0.8) [purple][semithick] node [ font=\fontsize{10}{10}\selectfont] {$v(x,t)=0$};
\node [below=0.5cm, align=flush center,text width=12cm] at  (0,-1.5)
        {Figure 2. Distribution of values of function $v(x,t)$. };
\end{tikzpicture}
\end{center}

 We use a contradiction argument. Assume that $u$ is a positive bounded solution of \eqref{3.1}.
 Denote \be\label{mp11}T=\lt({M}+1\rt)^{1/\beta}\,\,\; \mb{ with}\,\,\; M:=\sup\limits_{\R^n\tm\R}u(x,t). \ee

Then by a direct calculation, we derive that for all $(x,t)\in B_1(Re_1)\tm[1,T],$
\begin{eqnarray}\label{mp15}
(\partial_t-\Delta)^sv(x,t)&=&C_{n,s}\jf_{-\infty}^{t}\jf_{\R^n}\frac{\phi_R (x)\eta(t)-\phi_R (y)\eta(\tau)}{(t-\tau)^{\frac{n}{2}+1+s}}e^{-\frac{|x-y|^2}{4(t-\tau)}}\operatorname{d}\!y\operatorname{d}\!\tau
\nonumber \\&=&C_{n,s}\jf_{-\infty}^{t}\jf_{\R^n}\frac{(\phi_R (x)-\phi_R (y))\eta(t)+(\eta(t)-\eta(\tau))\phi_R (y)}{(t-\tau)^{\frac{n}{2}+1+s}}e^{-\frac{|x-y|^2}{4(t-\tau)}}\operatorname{d}\!y\operatorname{d}\!\tau
\nonumber\\&\leq&\eta(t)(-\Delta)^s\phi_R(x)+ \sup\limits_{\R^n}\phi_R (x)\partial_t^s\eta(t)\nonumber\\&\leq&(\lm_1\eta(t)+C_st^{\gamma-\beta})\sup\limits_{\R^n}\phi_R(x)\nonumber\\&\leq& \lm_1(T^\beta-1)+C_s\nonumber\\&:=& C_{T}.\end{eqnarray}
%That is, $v(x,t)$ is  a  sub-solution for  linear operator $(\partial_t-\Delta)^s-{{C}}_T$.

 On the other hand, it follows from equation \eqref{3.1} that for all $(x,t)\in B_1(Re_1)\tm[1,T],$
 \begin{eqnarray}\label{mp16}
 (\partial_t-\Delta)^s u(x,t)&\geq& (R-1)u^p(x,t)\nonumber\\
 &\geq& (R-1)m_T^p,
 \end{eqnarray}
 with
\[m_T:=\inf\limits_{B_1(0)\tm[1,T]}u(x,t)>0.\]
 Here we have used the monotonicity of $u$ in $x_1$-direction derived  from Theorem \ref{thm1}.

  Now we choose sufficiently large  $R$ such that \be\label{mp13}(R-1)m_T^p>{C}_T.\ee

Let $$w(x,t)=u(x,t)-v(x,t),$$then by \eqref{mp15}, \eqref{mp16},\eqref{mp13}, $w$ satisfies
\begin{equation}\label{mp17}
\begin{cases}
(\partial_t-\Delta)^s w(x,t)\geq 0,& \mb{ in } B_1(Re_1)\tm(1,T], \\
w(x,t)
>0, &\mb{ in } (\R^n\tm(-\infty,T])\backslash (B_1(Re_1)\tm(1,T]).
\end{cases}
\end{equation}

Consequently, by the  maximum principle (Theorem \ref{thm11}), we conclude

\be\label{mp18}w(x,t)>0,\,\mb{for~all}\,(x,t)\in B_1(Re_1)\tm(1,T].\ee
 That is,
\[u(x,t)>\phi_R(x)(t^\beta-1) \,\,\mb{in}\,\,B_1(Re_1)\tm(1,T],\]
and then
\[M>\max\limits_{\R^n}\phi_R(x)(T^\beta-1).\]
This  is a contradiction and  implies that equation  \eqref{3.1} possesses no positive bounded solution and  thus completes the proof of Theorem\ref{thm2}.
\end{proof}

\begin{proof}[Proof of Theorem \ref{thm3}]

Let $v(x,t)$ and $C_T$ be defined as in the proof of Theorem  \ref{thm2}, we will use a similar argument  to derive a contradiction.

From the proof of Theorem \ref{thm2}, we have
\begin{eqnarray}\label{sup}
(\partial_t-\Delta)^sv(x,t)\leq C_{T}, \, (x,t)\in B_1(Re_1)\tm[1,T].\end{eqnarray}

  Here \be\label{sup1}T=\lt({M}+1\rt)^{1/\beta}\,\,\; \mb{ with}\,\,\; M:=\sup\limits_{\R^n\tm\R}u(x,t). \ee

 Assume that $u$ is a positive bounded solution of \eqref{3.1},
  then since   $p<0$,  we have that  for all $(x,t)\in B_1(Re_1)\tm[1,T],$
 \begin{eqnarray}\label{sup2}
 (\partial_t-\Delta)^s u(x,t)&\geq& (R-1)u^p(x,t)\nonumber\\
 &\geq& (R-1)M^p.
 \end{eqnarray}

Comparing with the proof of Theorem \ref{thm2}, instead of $m_T$ in \eqref{mp13}, we use $M$ in \eqref{sup2}, since $p<0,$ we do not need the monotonicity of $u$ in $x_1$-direction.

  Now we choose sufficiently large  $R$ such that \be\label{sup3}(R-1)M^p>{C}_T.\ee

Then similar to the process of proof of Theorem \ref{thm2}, one can deduce   a contradiction with \eqref{sup1}, which  implies that equation  \eqref{3.1} possesses no positive bounded solution and  thus completes the proof of Theorem\ref{thm3}.\end{proof}

\begin{proof}[Proof of Theorem \ref{thm4}]

 By using a similar argument as in the proof of Theorem \ref{thm2},
we are able to modify \eqref{mp15} as

\begin{eqnarray}\label{sbp1}
(\partial_t-\Delta)^sv(x,t)&=&C_{n,s}\jf_{-\infty}^{t}\jf_{\R^n}\frac{\phi_R (x)\eta(t)-\phi_R (y)\eta(\tau)}{(t-\tau)^{\frac{n}{2}+1+s}}e^{-\frac{|x-y|^2}{4(t-\tau)}}\operatorname{d}\!y\operatorname{d}\!\tau
\nonumber \\&=&C_{n,s}\jf_{-\infty}^{t}\jf_{\R^n}\frac{(\phi_R (x)-\phi_R (y))\eta(t)+(\eta(t)-\eta(\tau))\phi_R (y)}{(t-\tau)^{\frac{n}{2}+1+s}}e^{-\frac{|x-y|^2}{4(t-\tau)}}\operatorname{d}\!y\operatorname{d}\!\tau
\nonumber\\&\leq&\eta(t)(-\Delta)^s\phi_R(x)+ \sup\limits_{\R^n}\phi_R (x)\partial_t^s\eta(t)\nonumber\\&\leq&\lm_1v(x,t)+C_s\fr{u(x,t)}{m_T},\end{eqnarray}
for each $(x,t)\in B_1(Re_1)\tm[ 1,T]$, where
\[m_T:=\inf\limits_{B_1(0)\tm[1,T]}u(x,t)>0.\]
In the above  we have used assumption \eqref{further-c} on $u$.

In virtue of  $0<p<1$, it follows   from equation \eqref{3.1} that for all $(x,t)\in B_1(Re_1)\tm[1,T],$
 \begin{eqnarray}\label{sbp2}
 (\partial_t-\Delta)^s u(x,t)&\geq& (R-1)u^p(x,t)\nonumber\\
 &\geq& (R-1)\fr{u(x,t)}{M^{1-p}},
 \end{eqnarray}
 where
\[M:=\sup\limits_{\R^n\tm\R}u(x,t).\]

  Now we choose sufficiently large  $R$ such that \be\label{sbp3}\fr{R-1}{M^{1-p}}>\fr{C_s}{m_T}+\lm_1.\ee

Let $$w(x,t)=u(x,t)-v(x,t),$$then by \eqref{sbp1}, \eqref{sbp2},\eqref{sbp3}, $w$ satisfies
\begin{equation}\label{sbp4}
\begin{cases}
(\partial_t-\Delta)^s w(x,t)\geq \lm_1 w(x,t),& \mb{ in } B_1(Re_1)\tm(1,T], \\
w(x,t)
>0, &\mb{ in } (\R^n\tm(-\infty,T])\backslash (B_1(Re_1)\tm(1,T]).
\end{cases}
\end{equation}

Consequently, by the  maximum principle for the operator $(\partial_t-\Delta)^s-\lm_1$ (Theorem \ref{thm12}), we conclude

\be\label{sbp5}w(x,t)>0,\,\mb{for~all}\,(x,t)\in B_1(Re_1)\tm(1,T],\ee
that is,
\[u(x,t)>\phi_R(x)(t^\beta-1) \,\,\mb{in}\,\,B_1(Re_1)\tm(1,T].\]
Therefore,

\[M>\sup\limits_{\R^n}\phi_R(x)(T^\beta-1),\]
which induces a contradiction.
This  shows that equation  \eqref{3.1} possesses no positive bounded solution and  thus completes the proof of Theorem\ref{thm4}.

\section{Acknowledgments} \,
The work of the first author is partially supported by MPS Simons foundation 847690 and National Natural Science Foundation of China (NSFC Grant No. 12071229).  The work of the second author is partially supported by the National Natural Science Foundation of China (Grant No.12501145, W2531006, 12250710674 and 12031012), the Natural Science Foundation of Shanghai (No. 25ZR1402207),   the China Postdoctoral Science Foundation (No. 2025T180838 and 2025M773061), the Postdoctoral Fellowship Program of CPSF (No. GZC20252004), and the Institute of Modern Analysis-A Frontier Research Center of Shanghai.

\end{proof}
\bigskip

Wenxiong Chen

Department of Mathematical Sciences

Yeshiva University

New York, NY, 10033, USA

wchen@yu.edu
\medskip

Yahong Guo

School of Mathematical Sciences

Shanghai Jiaotong University

Shanghai, 200240, P.R. China and
\smallskip

School of Mathematical Sciences

Nankai University

Tianjin, 300071 P. R. China 

guoyahong1995@outlook.com

\end{document}